\newtheorem{thm}{Theorem}
\newtheorem{ob}{Observation}
\newtheorem{lem}[thm]{Lemma}
\newtheorem{cor}[thm]{Corollary}
\newtheorem{prop}[thm]{Proposition}
\newtheorem{prob}{Problem}
\theoremstyle{definition}
\theoremstyle{remark}
\newcommand{\Kirsti}[1]{\textcolor{red}{#1}}
\newcommand{\DR}[1]{\textcolor{blue}{#1}}
\newcommand{\ggr}{\gamma_{\rm gr}}
\newcommand{\cp}{\,\square\,}
\newcommand{\cF}{{\cal F}}
\newcommand{\cFa}{{\cal F_{\alpha}}}
\newcommand{\vertex}{\node[vertex]}
\tikzstyle{vertex}=[circle, draw, inner sep=0pt, minimum size=6pt]
\begin{document}

\title{Graphs with equal Grundy domination and independence number}
\author{
G\'{a}bor Bacs\'{o}$^{a}$ \and Bo\v{s}tjan Bre\v{s}ar$^{b,c}$ \and
Kirsti Kuenzel$^{d}$ \and Douglas F.~Rall $^{e}$}

\maketitle

\begin{center}
$^a$ Institute for Computer Science and Control, ELKH, Budapest, Hungary \\
$^b$ Faculty of Natural Sciences and Mathematics, University of Maribor, Slovenia\\
$^c$ Institute of Mathematics, Physics and Mechanics, Ljubljana, Slovenia \\
$^d$ Department of Mathematics, Trinity College, Hartford, CT, USA\\
$^e$ Professor Emeritus of Mathematics, Furman University, Greenville, SC, USA\\
\end{center}
\medskip

\maketitle
\begin{abstract}
The Grundy domination number, ${\gamma_{\rm gr}}(G)$, of a graph $G$ is the maximum length of a sequence $(v_1,v_2,\ldots, v_k)$ of vertices in $G$ such that for every $i\in \{2,\ldots, k\}$, the closed neighborhood $N[v_i]$ contains a vertex that does not belong to any closed neighborhood $N[v_j]$, where $j<i$. It is well known that the Grundy domination number of any graph $G$ is greater than or equal to the upper domination number $\Gamma(G)$, which is in turn greater than or equal to the independence number $\alpha(G)$. In this paper, we initiate the study of the class of graphs $G$ with $\Gamma(G)={\gamma_{\rm gr}}(G)$ and its subclass consisting of graphs $G$ with $\alpha(G)={\gamma_{\rm gr}}(G)$. We characterize the latter class of graphs among all twin-free connected graphs, provide a number of properties of these graphs, and prove that the hypercubes are members of this class.  In addition, we give several necessary conditions for graphs $G$ with $\Gamma(G)={\gamma_{\rm gr}}(G)$ and present large families of such graphs.

\end{abstract}

{\small \textbf{Keywords:} } Grundy domination, independence number, upper domination number, bipartite graph\\
\indent {\small \textbf{AMS subject classification:} 05C69, 05C75 }
\section{Introduction} \label{sec:intro}

Given a graph $G$, a set $D$ is a {\em dominating set} if every vertex in $V(G)-D$ has a neighbor in $D$.  The {\em domination number} of $G$ is defined as $\gamma(G)=\min\{|D|:\, D \textrm{ is a dominating set of } G\}$.
A vertex $x$ {\em dominates} a vertex $y$ if $y$ is a neighbor of $x$ or $y=x$.  Building a dominating set in $G$ can be viewed as a process of adding vertices from $G$ to $D$ one by one so that each time a vertex $x$ is added to $D$ it dominates a vertex that was not dominated by vertices added to $D$ before $x$. The size of a largest dominating set obtained by such a process is the {\em Grundy domination number}, $\ggr(G)$, of $G$. Grundy domination was introduced in~\cite{br-go-mi-ra-ri-2014} 
and studied by a number of authors, see~\cite{bell-2021, br-bu-go-kl-ko-pa-tu-vi-2016, br-bu-go-kl-ko-pa-tu-vi-2017, br-ko-to-2019, camp-2021, li-2019, na-to-2020} for a selection of papers on this parameter.

It follows from the definitions that $\gamma(G)\le \ggr(G)$ in any graph $G$, and often the Grundy domination number is much larger than the domination number of $G$. In the seminal paper from 2014~\cite{br-go-mi-ra-ri-2014}, the question of which graphs $G$ enjoy $\gamma(G)=\ggr(G)$ was considered. It was proved that $\gamma(G)=\ggr(G)=1$ only in complete graphs and $\gamma(G)=\ggr(G)=2$ precisely in graphs $G$ whose complement $\overline{G}$ is the disjoint union of one or more complete bipartite graphs. A few years later, Erey proved that the mentioned classes of graphs are the only connected graphs in which equality $\gamma(G)=\ggr(G)=2$ holds~\cite{er-2019}. We mention that an analogous question for two related parameters, the total domination number and the Grundy total domination number, was intensively studied~\cite{bah-2021, br-he-ra-2016, dravec-2022}, yet a complete characterization seems to be elusive.

Since the complete characterization of the graphs $G$ with $\gamma(G)=\ggr(G)$ has been found, natural questions appear by involving graph parameters that lie between $\gamma$ and $\ggr$. Two such important parameters (namely, the independence number $\alpha(G)$ and the upper domination number $\Gamma(G)$) will be considered in this paper, and graphs $G$ in which $\ggr(G)$ is equal to one of these parameters will be studied.
In the next two subsections,  we (1) give some necessary definitions and present basic observations that arise, and (2) formulate the main results of the paper and its organization.

\subsection{Definitions and preliminaries}

Let $G$ be a finite, simple graph with vertex set $V(G)$ and edge set $E(G)$.  (When there is no chance of confusion we will
shorten this notation by setting $V=V(G)$ and $E=E(G)$.)  The order of $G$ will be denoted by $n(G)$.   For a vertex $x \in V$, the {\em open neighborhood of $x$}
is the set $N(x)$ defined by $N(x)=\{w \in V:\, xw \in E\}$.  The {\em closed neighborhood} $N[x]$ is
$N(x) \cup \{x\}$.   The open neighborhood of a set $A \subseteq V$ is $N(A)=\cup_{a\in A}N(a)$ and its
closed neighborhood is $N[A]=N(A) \cup A$.  Two vertices $u$ and $v$ of $G$ are {\em twins} if $N[u]=N[v]$, and we say $G$ is {\em twin-free}
if it has no twins.  For $a \in A$, the {\em private neighborhood} of $a$ (with respect to $A$) is denoted by pn$[a,A]$ and is defined
by pn$[a,A]=\{w \in V:\, N[w] \cap A=\{a\}\}$.  Any vertex in pn$[a,A]$ is called a {\em private neighbor} of $a$ with respect to $A$.
The subgraph of $G$ induced by $A$ is denoted by $G\langle A\rangle$, and for a positive integer $n$, we will use $[n]$ to
denote the set of positive integers not larger than $n$.

A set $A$ of vertices  is a {\em dominating} set of $G$ if $N[A]=V$.  A dominating set $A$ is a minimal dominating set if $A-\{a\}$ does not dominate
$G$ for every $a \in A$.  (Equivalently, pn$[a,A]\neq \emptyset$ for every $a \in A$.)  Imposing this minimality condition while not requiring the set
to be dominating leads to the concept of irredundance.  The set $A$ is {\em irredundant} in $G$ if pn$[a,A]\neq \emptyset$ for every $a \in A$.
Note that a dominating set of $G$ is a minimal dominating set only if it is a maximal irredundant set in $G$.  The {\em domination number} of $G$, denoted $\gamma(G)$,
is the minimum cardinality of a dominating set of $G$.  The {\em upper domination number} of $G$ is the cardinality of a largest minimal dominating set
of $G$ and is denoted $\Gamma(G)$.  The minimum cardinality of a maximal irredundant set in $G$ is the {\em irredundance number} of $G$ and is denoted by
$ir(G)$ while the {\em upper irredundance number}, $IR(G)$, is the maximum cardinality of an irredundant set in $G$.  The {\em independence number} of $G$
is denoted $\alpha(G)$ and is the maximum cardinality of a subset of vertices in $G$ that are pairwise nonadjacent; $i(G)$ denotes the {\em independent domination number}
of $G$, which is the minimum cardinality of a dominating set that is also independent.  Equivalently, $i(G)$ is the minimum cardinality of a maximal
independent set.  If $A$ is a minimal dominating set of cardinality $\gamma(G)$ (respectively, $\Gamma(G)$), then $A$ will be called a $\gamma(G)$-set (respectively, a $\Gamma(G)$-set).  Similar language will be used for each of these other four graphical invariants
$ir,i,\alpha$ and $IR$.  If $I$ is an independent set in $G$, then $x \in {\rm pn}[x,I]$ for every
vertex $x$ in $I$, which implies that $I$ is irredundant.  For any graph $G$, the following well known and much studied string of inequalities

\begin{equation} \label{eqn:commondomstring}
ir(G) \le \gamma(G) \le i(G) \le \alpha(G) \le \Gamma(G) \le IR(G)
\end{equation}
follows from these definitions.

In what follows we will need the following result of Cockayne et al.
\begin{thm} {\rm (\cite[Theorem 5]{co-fa-pa-th-1981})} \label{thm:topthreebipartite}
If $G$ is a bipartite graph, then $\alpha(G)=\Gamma(G)= IR(G)$.
\end{thm}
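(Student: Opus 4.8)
The plan is to reduce the statement to the single inequality $IR(G)\le\alpha(G)$: the chain \eqref{eqn:commondomstring} already gives $\alpha(G)\le\Gamma(G)\le IR(G)$ for every graph, so once $IR(G)\le\alpha(G)$ is established for bipartite $G$, all three parameters coincide. To prove $IR(G)\le\alpha(G)$ I would fix a bipartition $(A,B)$ of $G$ and an irredundant set $X$ with $|X|=IR(G)$, and from $X$ construct an independent set of the same cardinality.

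Write $X_A=X\cap A$ and $X_B=X\cap B$, and note that $X_A$ is independent because $A$ is. The construction replaces each vertex of $X_B$ by one of its private neighbours lying in $A$, whenever such a private neighbour exists. Precisely, for $b\in X_B$: if $b$ has no neighbour in $X$, set $f(b)=b$; otherwise $b\notin{\rm pn}[b,X]$, so irredundancy of $X$ yields a vertex $f(b)\in{\rm pn}[b,X]$ with $f(b)\ne b$, and since every neighbour of $b$ lies in $A$ we get $f(b)\in A\setminus X$. Set $S=X_A\cup\{f(b):b\in X_B\}$. Since distinct vertices of $X$ have disjoint private neighbourhoods with respect to $X$, the map $f$ is injective; and its image avoids $X_A$ (an element of $A\setminus X$ cannot lie in $X_A$, and $f(b)=b\in B$ cannot either), so $|S|=|X_A|+|X_B|=IR(G)$.

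It then remains to check that $S$ is independent. The vertices of $S$ lying in $A$---namely $X_A$ together with those $f(b)$ that are external private neighbours---form an independent set, since $A$ is independent. The remaining vertices of $S$ are exactly the $b\in X_B$ having no neighbour in $X$: these lie in $B$, so they are pairwise nonadjacent, and they have no neighbour in $X\supseteq X_A$. Finally, such a $b$ is nonadjacent to every external private neighbour $f(b')$, because any vertex of $X$ adjacent to $f(b')$ belongs to $N[f(b')]\cap X=\{b'\}$, which would force $b=b'$, impossible since $b$ has no neighbour in $X$ while $b'$ does. Hence $S$ is independent, $\alpha(G)\ge|S|=IR(G)$, and the theorem follows from \eqref{eqn:commondomstring}.

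There is no deep obstacle here; the whole argument rests on choosing the substitution $X_B\to A$ correctly, and the only mildly delicate point in the verification is that the vertices of $X_B$ kept ``as themselves'' are nonadjacent to the chosen external private neighbours, which is exactly where the definition of private neighbour is invoked. Bipartiteness enters only through the facts that $X_A$ is independent and that every external private neighbour of a vertex of $X_B$ automatically lands in the independent side $A$; since $IR$ can strictly exceed $\alpha$ in general graphs, some such hypothesis is unavoidable.
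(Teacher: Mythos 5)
Your proof is correct. Note that the paper itself gives no argument for this statement---it is imported verbatim as Theorem~5 of the cited Cockayne--Favaron--Payan--Thomason paper---so there is no internal proof to compare against; your write-up is a valid self-contained proof and is essentially the classical exchange argument for $IR(G)\le\alpha(G)$ in bipartite graphs. All the steps check out: the reduction via \eqref{eqn:commondomstring} to the single inequality $IR(G)\le\alpha(G)$; the fact that an external private neighbour of $b\in X_B$ must lie in $A\setminus X$ because it is adjacent to $b$; injectivity of $f$ from disjointness of the sets ${\rm pn}[b,X]$; and the final adjacency check, where the only nontrivial case (a self-private $b\in X_B$ versus an external private neighbour $f(b')$) is correctly dispatched by $N[f(b')]\cap X=\{b'\}$ together with the fact that $b$ has no neighbour in $X$ while $b'$ does.
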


Let $S=(x_1,\ldots,x_n)$ be a sequence of distinct vertices in $G$.  We denote the length of $S$ by $|S|$.  The set $\{x_1,\ldots,x_n\}$ whose elements are the vertices in $S$ is denoted  by $\widehat{S}$. The sequence $S$ is called a  {\em closed neighborhood sequence} (or a {\em legal sequence}) if
\begin{equation}
\label{eq:defGrundy}
N[x_{i+1}]-\bigcup_{j=1}^{i}N[x_j]\not=\emptyset
\end{equation}
for each $i\in [n-1]$. That is, $(x_1,\ldots,x_n)$ is a closed neighborhood sequence if $x_{i+1}$ has a private neighbor with respect to $\{x_1,\ldots,x_{i+1}\}$ for each $i\in [n-1]$.  We will also say that $x_{i+1}$ {\em footprints} the vertices from $N[x_{i+1}] - \bigcup_{j=1}^{i}N[x_j]$ with respect to $S$, and that $x_{i+1}$ is the {\em footprinter} of any $v\in N[x_{i+1}] - \bigcup_{j=1}^{i}N[x_j]$.   If $S$ is a legal sequence and $\widehat{S}$ is a dominating set of $G$, then $S$ is called a {\em dominating sequence} in $G$.  It is clear that for a dominating sequence $S$ each vertex in $V$ has a unique footprinter in $\widehat{S}$.  Hence,  the function $f_S:V\to \widehat{S}$ that maps each vertex to its footprinter is well defined.
Clearly, a shortest possible dominating sequence has length $\gamma(G)$. A longest possible dominating sequence in $G$ is called a {\em Grundy dominating sequence},  and its length is the {\em Grundy domination number} of $G$, denoted $\ggr(G)$.  Legal sequences were introduced in~\cite{br-go-mi-ra-ri-2014} as sequences of legal moves in the domination game played on a graph. If Staller is the only player making moves, then the length of the resulting dominating sequence  is the Grundy domination number of the graph. See the book~\cite{book-domination-game} for more on domination game and its relations with dominating sequences. 

Note that any legal sequence in $G$ can always be extended (if it is not already) to a dominating sequence in $G$.  Thus, any longest legal sequence in $G$ is a Grundy dominating sequence.  A legal sequence that remains legal under any permutation of its vertices is said to be {\em commutative}.

The next observation follows immediately from the definitions.

\begin{ob} \label{ob:commutative}
If $S=(x_1,\ldots,x_k)$ is a legal sequence in $G$ such that $\widehat{S}$ is irredundant, then $S$ is commutative.
\end{ob}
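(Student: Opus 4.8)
The plan is to read off the conclusion directly from the definition of irredundance, using the private neighbors guaranteed by the hypothesis. First I would fix, for every index $i\in[k]$, a vertex $w_i\in{\rm pn}[x_i,\widehat{S}]$; such a vertex exists precisely because $\widehat{S}$ is irredundant in $G$. By definition of private neighbor this means $N[w_i]\cap\widehat{S}=\{x_i\}$, so $w_i\in N[x_i]$ while $w_i\notin N[x_j]$ for every $j\neq i$. Note that we are not really using legality of the original sequence $S$ here, only that its terms are pairwise distinct vertices.

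Next I would take an arbitrary permutation $\sigma$ of $[k]$ and verify that $S'=(x_{\sigma(1)},\ldots,x_{\sigma(k)})$ satisfies condition~\eqref{eq:defGrundy}. Fix $i\in\{2,\ldots,k\}$ and consider the vertex $w_{\sigma(i)}$. Since $\sigma(1),\ldots,\sigma(i)$ are pairwise distinct, we have $\sigma(j)\neq\sigma(i)$ for each $j<i$, hence $w_{\sigma(i)}\notin N[x_{\sigma(j)}]$ for all $j<i$; on the other hand $w_{\sigma(i)}\in N[x_{\sigma(i)}]$. Therefore
\[
w_{\sigma(i)}\in N[x_{\sigma(i)}]-\bigcup_{j=1}^{i-1}N[x_{\sigma(j)}],
\]
so this set is nonempty. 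As this holds for every $i\in\{2,\ldots,k\}$, the sequence $S'$ is legal, and since $\sigma$ was arbitrary, $S$ is commutative.

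There is essentially no obstacle in this argument; it is a straightforward unfolding of the definitions of legal sequence and of irredundant set. The only point that requires a moment's care is observing that a permutation keeps the indices pairwise distinct, which is exactly what lets us conclude that each chosen private neighbor $w_{\sigma(i)}$ is excluded from all the earlier closed neighborhoods in $S'$.
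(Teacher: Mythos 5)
Your proof is correct and is exactly the argument the paper has in mind: the paper gives no explicit proof (it states the observation "follows immediately from the definitions" and then remarks that any permutation of an irredundant set forms a legal sequence), and your choice of a private neighbor $w_i$ for each $x_i$ as the witness for condition~\eqref{eq:defGrundy} under any permutation is the straightforward unfolding of those definitions. Your side remark that legality of the original $S$ is not actually needed is also accurate and consistent with the paper's stronger follow-up claim.
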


If $A$ is an irredundant set in $G$, then any permutation of the vertices in $A$ forms a legal sequence.    If $A$ is also a dominating set, then
this sequence is a dominating sequence.  On the other hand, if $A$ does not dominate $G$, then, as noted above, the sequence can be extended to a dominating sequence in $G$.  This immediately implies that $IR(G) \le \ggr(G)$, and thus for any $G$, we have the following extension of~\eqref{eqn:commondomstring}
\begin{equation} \label{eqn:expandedstring}
ir(G) \le \gamma(G) \le i(G) \le \alpha(G) \le \Gamma(G) \le IR(G) \le \ggr(G).
\end{equation}

The join $G\oplus H$ of graphs $G$ and $H$ is the graph obtained from the disjoint union of $G$ and $H$ by adding the edges from the set $\{gh:\, g\in V(G) \text{ and } h\in V(H)\}$.
Given two graphs $G$ and $H$, the {\em Cartesian product} $G\cp H$ of $G$ and $H$ is the graph with $V(G\cp H)=V(G)\times V(H)$ and $(g,h)(g',h')\in E(G\cp H)$ whenever ($g=g'$ and $hh'\in E(H)$) or ($gg'\in E(G)$ and $h=h'$). Cartesian product is associative and commutative. The {\em $k$-cube}, $Q_k$, or the {\em hypercube of dimension $k$}, is the Cartesian product of $k$ copies of the graph $K_2$.

\subsection{Goal and brief outline of the paper}

In this paper, we initiate the study of two natural classes of graphs that arise from involving three invariants in the above inequality chain~\eqref{eqn:expandedstring}.   Notably, we consider the graphs $G$ with $\Gamma(G)=\ggr(G)$, and the graphs $G$ with $\alpha(G)=\ggr(G)$.

It is easy to see that if $u$ is a twin in the graph $G$, then $\Gamma(G)= \Gamma(G-u)$
and $\ggr(G)=\ggr(G-u)$.  Therefore, we will assume that all the graphs under investigation are twin-free.  Furthermore, $\Gamma(G)=\ggr(G)$ if and only if $\Gamma(C)=\ggr(C)$, for each component $C$ of $G$.  With this in mind we let $\cF$ denote the class of twin-free, connected graphs $G$ for which $\Gamma(G)=\ggr(G)$.  In addition we
let $\cFa$ be the subclass of $\cF$ consisting of those $G \in \cF$ such that $\alpha(G)=\ggr(G)$. Note that $\cFa$ is a proper subclass of $\cF$ as can be seen by
$\Gamma(K_n\cp K_2)=n=\ggr(K_n\cp K_2)$ and $\alpha(K_n \cp K_2)=2$.

Graphs in $\cF$ and $\cFa$ are in some sense very special, since $\gamma_{gr}(G) - \Gamma(G)$ and $\gamma_{gr}(G) - \alpha(G)$ can be arbitrarily large.  Indeed, consider the following family of graphs.
For each positive integer $n$ let $V(G_n)=\{x_1, \ldots, x_n,y_1,\ldots, y_n,z_1, \ldots, z_n\}$.  Both of the sets $\{y_1, \ldots, y_n\}$ and
$\{z_1, \ldots, z_n\}$ induce a complete subgraph in $G_n$ and $\{x_1, \ldots, x_n\}$ is independent.  The remaining edges of $G_n$
are $x_iy_i$ and $y_iz_i$ for each $i \in [n]$.
Now, $\gamma_{gr}(G_n) = 2n$, since $S=(x_1, x_2, \ldots, x_n,y_1,y_2, \ldots, y_n)$ is a Grundy dominating set. On the other hand, $\Gamma(G_n)=n+1$ and $\alpha(G_n)=n+1$.

In Section~\ref{sec:F} we consider the class $\cF$ of graphs $G$ with $\Gamma(G)=\ggr(G)$. Note that any minimal dominating set $D$ of size $\Gamma(G)$ gives rise to the partition of  $G$ into vertices in $D$, the private neighbor sets for all vertices in $D$, and the remaining vertices (which are not in $D$ and have at least two neighbors in $D$). We present several necessary conditions that a graph in $\cF$ must possess, which are expressed in terms of the private neighborhoods of vertices in a minimal dominating set of size $\Gamma(G)$.  While these conditions do not necessarily give rise to a characterization of graphs in $\cF$, in Section~\ref{sec:Examples} we present several families of graphs that belong to $\cF$ or even to $\cFa$. We prove that the operation of join preserves the property of a graph being in $\cF $, and provide necessary conditions on graphs $G$ and $H$ whose Cartesian product $G\cp H$ belongs to $\cFa$. In addition, we use some connections with linear algebra to prove that all hypercubes belong to $\cFa$. Section~\ref{sec:cFa} is about graphs in $\cFa$ and is the most extensive one. We prove several necessary and sufficient conditions that a triangle-free graph in $\cFa$ must possess. Most of these conditions are of structural nature and can be expressed as properties that are related to a maximum independent set of a graph. In particular, they lead to a characterization of bipartite graphs in $\cFa$ whose girth is at least $6$. Finally, in Theorem~\ref{thm:main} we give our main result, which is a characterization of graphs in $\cFa$ among all graphs. The characterization is not structural, since it relies on specific properties that any legal closed neighborhood sequence must possess. Nevertheless, it implies a characterization of $n$-crossed prisms that belong to $\cFa$. We conclude the paper with several remarks and open problems.

\section{The Class $\cF$}
\label{sec:F}
In this section, we derive a number of properties that hold for any $\Gamma(G)$-set if $G \in \cF$.  First we see that the class $\cF$
coincides with the class of twin-free, connected graphs whose upper irredundance number equals its Grundy domination number.

\begin{lem} \label{lem:GrundyEqualsIR}
If $G$ is any twin-free, connected graph and $IR(G)=\ggr(G)$, then $G \in \cF$.
\end{lem}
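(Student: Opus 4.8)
The statement to prove is: if $G$ is twin-free and connected with $IR(G)=\ggr(G)$, then $\Gamma(G)=\ggr(G)$, i.e.\ $G\in\cF$. By the inequality chain~\eqref{eqn:expandedstring} we always have $\Gamma(G)\le IR(G)\le\ggr(G)$, so under the hypothesis it suffices to show $\Gamma(G)\ge IR(G)$; equivalently, that some maximum irredundant set of $G$ is in fact a \emph{minimal dominating} set. So the plan is to take an $IR(G)$-set $A$ and argue that $A$ must dominate $G$.

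The key step: suppose for contradiction that $A$ is an $IR(G)$-set that does not dominate $G$. Then there is a vertex $u\in V(G)$ with $N[u]\cap A=\emptyset$. Since $A$ is irredundant, every $a\in A$ has a private neighbor, so the sequence obtained by listing $A$ in any order is a legal sequence (as noted in the paragraph before~\eqref{eqn:expandedstring}). Because $A$ does not dominate $G$, this legal sequence can be extended — the natural thing is to try to append $u$ (or some vertex in $N[u]$) to the sequence: since $N[u]$ is disjoint from $N[A]$, the vertex $u$ footprints at least itself with respect to $A\cup\{u\}$, so $(a_1,\ldots,a_{|A|},u)$ is still legal. This yields a legal sequence of length $|A|+1=IR(G)+1>\ggr(G)$, contradicting the definition of the Grundy domination number (recall any legal sequence extends to a dominating one, so its length is at most $\ggr(G)$). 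Hence $A$ dominates $G$; being irredundant and dominating it is a minimal dominating set, so $\Gamma(G)\ge|A|=IR(G)$, giving $\Gamma(G)=\ggr(G)$.

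I should double-check one subtlety: I do \emph{not} actually need $A$ to be minimal dominating a priori — irredundance alone makes any ordering of $A$ legal, and that is all the extension argument uses. The twin-free and connectedness hypotheses are not really needed for this implication (they are part of the definition of the class $\cF$, which is why they appear), so the proof is clean. The only place to be careful is the standard fact, already recorded in the excerpt, that a legal sequence can always be extended to a dominating sequence and therefore has length at most $\ggr(G)$; invoking this immediately closes the argument. I do not anticipate a genuine obstacle here — the main point is simply to recognize that the hypothesis $IR(G)=\ggr(G)$ forces the extremal irredundant set to already be dominating, since otherwise it could be lengthened past $\ggr(G)$.
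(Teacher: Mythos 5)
Your proof is correct and follows essentially the same route as the paper: list an $IR(G)$-set as a legal sequence (possible since the set is irredundant), observe that it could be extended to a longer legal sequence if it failed to dominate, conclude it is a minimal dominating set, and close the inequality chain $\Gamma(G)\ge IR(G)=\ggr(G)\ge\Gamma(G)$. The only difference is that you make the extension step explicit by appending an undominated vertex, whereas the paper just cites the fact that any legal sequence extends to a dominating one.
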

\begin{proof}
Suppose $IR(G)=n$ and let $S=(x_1,\ldots,x_n)$ be a legal sequence formed from a $IR(G)$-set $\widehat{S}$.  Since $|S|=IR(G)=\ggr(G)$ by assumption,
it follows that $\widehat{S}$ is a dominating set (for otherwise $S$ could be extended to a dominating sequence).  Also, $\widehat{S}$ is a minimal dominating set since $\widehat{S}$ is irredundant.  Now we get
\[\Gamma(G)\ge |\widehat{S}|=\ggr(G)=IR(G)\ge \Gamma(G)\,,\]
and thus $\Gamma(G)=\ggr(G)$.
\end{proof}

\begin{lem} \label{lem:pn-a-clique}
Let $G \in \cF$.  If $D$ is any $\Gamma(G)$-set, then {\rm pn}$[u,D]$ induces  a complete subgraph of $G$ for every $u \in D$.

\end{lem}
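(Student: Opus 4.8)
The plan is to argue by contradiction. Suppose that for some $u\in D$ the set ${\rm pn}[u,D]$ does not induce a complete subgraph of $G$, so that there are distinct vertices $w_1,w_2\in{\rm pn}[u,D]$ with $w_1w_2\notin E(G)$. I will construct a legal closed neighborhood sequence of length $\Gamma(G)+1$. Since no legal sequence can be longer than a Grundy dominating sequence, and a Grundy dominating sequence of $G$ has length $\ggr(G)=\Gamma(G)$, this is a contradiction and the lemma follows.

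First I would set up a base sequence. Being a $\Gamma(G)$-set, $D$ is a minimal dominating set, hence irredundant; any subset of an irredundant set is irredundant, so $D\setminus\{u\}$ is irredundant, and therefore (using the fact, recorded in the preliminaries, that every permutation of an irredundant set is a legal sequence) listing the vertices of $D\setminus\{u\}$ in any order produces a legal sequence $S$ with $|S|=\Gamma(G)-1$. I would also record one easy observation about $w_1$ and $w_2$: a vertex of ${\rm pn}[u,D]$ different from $u$ is, by definition of private neighbor, adjacent to $u$; hence if one of $w_1,w_2$ equaled $u$, the other would be adjacent to it, contradicting $w_1w_2\notin E(G)$. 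Thus $w_1\neq u\neq w_2$, and in particular $w_1,w_2\notin D$ (since ${\rm pn}[u,D]\cap D\subseteq\{u\}$).

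Next I would extend $S$ by appending $w_1$ and then $w_2$. For $i\in\{1,2\}$ the condition $w_i\in{\rm pn}[u,D]$ means $N[w_i]\cap D=\{u\}$, so $w_i\notin N[d]$ for every $d\in D\setminus\{u\}$. Consequently $w_1$ has a private neighbor, namely itself, with respect to $\widehat{S}\cup\{w_1\}$, so $(S,w_1)$ is legal; and $w_2\notin\bigcup_{d\in D\setminus\{u\}}N[d]$ while also $w_2\notin N[w_1]$ because $w_1w_2\notin E(G)$ and $w_1\neq w_2$, so $w_2$ again footprints itself and $(S,w_1,w_2)$ is legal of length $\Gamma(G)+1$, the desired contradiction.

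The argument is short, and essentially all of it is bookkeeping against the definitions; the one place that needs genuine care is the last paragraph, where one must verify that appending $w_1$ and then $w_2$ preserves legality. This reduces to two points: neither $w_1$ nor $w_2$ lies in $\bigcup_{d\in D\setminus\{u\}}N[d]$, which is immediate from $w_1,w_2\in{\rm pn}[u,D]$, and $w_2\notin N[w_1]$, which is exactly the non-adjacency hypothesis. The degenerate possibility that a claimed private neighbor of $u$ is $u$ itself must be excluded first, as done above. Note that neither connectedness nor twin-freeness of $G$ is actually used in this proof.
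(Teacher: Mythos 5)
Your proof is correct and follows essentially the same route as the paper's: list the vertices of $D\setminus\{u\}$ (legal because $D$ is irredundant) and then append the two non-adjacent private neighbors, each of which footprints itself, to obtain a legal sequence of length $\Gamma(G)+1$. Your explicit handling of the degenerate case $w_i=u$ is a small extra care the paper leaves implicit, but the argument is the same.
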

\begin{proof}
Let $D=\{x_1,\ldots,x_n\}$ be a minimal dominating set of cardinality $\Gamma(G)$ and suppose that $v_1$ and $v_2$ are distinct vertices in
pn$[x_1,D]$.  The sequence $(x_1,x_2,\ldots,x_n)$ is a Grundy dominating sequence.  If $v_1v_2 \notin E$, then $(x_2,x_3,\ldots,x_n,v_1,v_2)$ is
a legal sequence of length $n+1$ since $v_1$ footprints itself and $v_2$ footprints itself.  This  contradiction implies that $G\langle{\rm pn}[x_1,D]\rangle$ is
a complete subgraph.  Since any permutation of $D$ is a legal sequence, the lemma follows.
\end{proof}

\begin{cor} \label{cor:trianglefree}
If $G$ is a triangle-free graph in $\cF$ and $D$ is a $\Gamma(G)$-set, then $|pn[u,D]|\le 2$ for every $u \in D$.
\end{cor}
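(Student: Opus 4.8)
The plan is to deduce this immediately from Lemma~\ref{lem:pn-a-clique}. Since $G \in \cF$ and $D$ is a $\Gamma(G)$-set, that lemma tells us that for every $u \in D$ the set ${\rm pn}[u,D]$ induces a complete subgraph of $G$. The only remaining point is the observation that a triangle-free graph contains no $K_3$ as a subgraph, so any clique in $G$ has at most two vertices; in particular $|{\rm pn}[u,D]| \le 2$ for every $u \in D$.

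So the one genuine step is: fix $u \in D$, apply Lemma~\ref{lem:pn-a-clique} to conclude $G\langle {\rm pn}[u,D]\rangle$ is complete, and then note that if $|{\rm pn}[u,D]| \ge 3$ we could pick three vertices of ${\rm pn}[u,D]$, which would be pairwise adjacent and hence form a triangle, contradicting that $G$ is triangle-free. Since $u$ was arbitrary, the bound holds for all $u \in D$.

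There is no real obstacle here: the content is entirely carried by Lemma~\ref{lem:pn-a-clique}, and triangle-freeness does the rest in one line. I would write the proof in two or three sentences.

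\begin{proof}
Let $D$ be a $\Gamma(G)$-set and let $u \in D$. By Lemma~\ref{lem:pn-a-clique}, ${\rm pn}[u,D]$ induces a complete subgraph of $G$. If $|{\rm pn}[u,D]| \ge 3$, then any three vertices of ${\rm pn}[u,D]$ are pairwise adjacent and thus form a triangle in $G$, contradicting the assumption that $G$ is triangle-free. Hence $|{\rm pn}[u,D]| \le 2$.
\end{proof}
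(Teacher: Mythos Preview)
Your proof is correct and is exactly the approach the paper takes: the corollary is stated immediately after Lemma~\ref{lem:pn-a-clique} with no separate proof, since it follows at once from that lemma together with the observation that a clique in a triangle-free graph has at most two vertices.
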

In particular, this corollary holds for bipartite graphs in $\cF$.  Note that if $G$ is triangle-free, then $|{\rm pn}[u,D]| = 2$ in the conclusion
of Corollary~\ref{cor:trianglefree} is possible only if $u$ is isolated in the subgraph induced by $D$.

\begin{lem} \label{lem:pn-in-closednhbd}
If $G \in \cF$ and $D$ is any $\Gamma(G)$-set, then for every vertex $u$ in $G$, there exists $x \in D$ such that pn$[x,D]\subseteq N[u]$.
\end{lem}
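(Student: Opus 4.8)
The plan is to argue by contradiction using the Grundy-dominating-sequence / footprinting machinery. Suppose $G \in \cF$, let $D = \{x_1,\ldots,x_n\}$ be a $\Gamma(G)$-set (so $n = \Gamma(G) = \ggr(G)$), and suppose toward a contradiction that there is a vertex $u$ such that for every $x \in D$ we have $\mathrm{pn}[x,D] \not\subseteq N[u]$. I want to build a legal closed neighborhood sequence of length $n+1$, contradicting $\ggr(G) = n$. The natural candidate is to prepend or append $u$ to a suitable permutation of $D$: since any permutation of $D$ is a legal sequence (by Observation~\ref{ob:commutative}, as $D$ is irredundant), the real question is whether $u$ can be inserted somewhere so that $u$ footprints some vertex.

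First I would try the sequence $(u, x_1, x_2, \ldots, x_n)$. For this to be legal we need $N[u]$ to be nonempty (clear), and then we need each $x_i$ to footprint something with respect to the prefix ending at $x_i$, i.e. $N[x_i] \setminus (N[u] \cup \bigcup_{j<i} N[x_j]) \neq \emptyset$. Here is where the hypothesis enters: for each $i$, since $\mathrm{pn}[x_i,D] \not\subseteq N[u]$, pick $w_i \in \mathrm{pn}[x_i,D] \setminus N[u]$. Then $w_i \in N[x_i]$, $w_i \notin N[u]$, and $w_i \notin N[x_j]$ for any $j \neq i$ (by definition of private neighbor). Hence $w_i$ witnesses that $x_i$ footprints a vertex with respect to $(u, x_1, \ldots, x_i)$. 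So the sequence $(u, x_1, \ldots, x_n)$ is legal and has length $n+1 > \ggr(G)$ — contradiction. This gives the result immediately.

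I should double-check one degenerate point: the argument implicitly uses that $u \notin D$ and that the $w_i$ are genuinely private neighbors. If $u \in D$, say $u = x_k$, then trivially $\mathrm{pn}[x_k, D] \subseteq N[x_k] = N[u]$, so the hypothesis "$\mathrm{pn}[x,D]\not\subseteq N[u]$ for all $x\in D$" already fails and there is nothing to prove; so we may assume $u \notin D$. Also, each $x_i$ has a nonempty private neighbor set precisely because $D$ is a minimal dominating set, so $\mathrm{pn}[x_i,D]$ is nonempty and the choice of $w_i$ is the only thing that could fail — and it fails exactly when $\mathrm{pn}[x_i,D] \subseteq N[u]$, which we have assumed does not happen. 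So no obstacle remains; the lemma follows in a few lines. The only "subtlety" worth a sentence in the writeup is spelling out why $(u,x_1,\ldots,x_n)$ being legal contradicts $\ggr(G)=n$: a legal sequence can always be extended to a dominating sequence, so a legal sequence of length $n+1$ forces $\ggr(G) \geq n+1$, contradicting $\ggr(G) = \Gamma(G) = n$. I do not expect any genuinely hard step here; the content is entirely in choosing the right sequence and reading off the private-neighbor condition.
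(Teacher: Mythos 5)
Your proof is correct and is essentially identical to the paper's: both argue by contradiction, choose a witness $x_i'\in{\rm pn}[x_i,D]-N[u]$ for each $i$, and show that $(u,x_1,\ldots,x_n)$ is a legal sequence of length $n+1$, contradicting $\ggr(G)=n$. Your extra remarks (the case $u\in D$ and why a legal sequence of length $n+1$ forces $\ggr(G)\ge n+1$) are correct but not needed beyond what the paper already records.
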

\begin{proof}
Let $G \in \cF$. Suppose for the sake of contradiction that $G$ has a minimal dominating set $D$ of cardinality $\Gamma(G)$ and there exists $u \in V$ such that
pn$[x,D]-N[u]\neq \emptyset$ for every $x \in D$.  Let $D=\{x_1,\ldots,x_n\}$ and let $x_i' \in {\rm pn}[x_i,D]-N[u]$ for each $i\in[n]$.  The sequence
$(x_1,\ldots,x_n)$ is a Grundy dominating sequence.  Let $S=(u,x_1,\ldots,x_n)$.  It follows  that with respect to $S$, $u$ footprints itself, and for each $i \in [n]$, the
vertex $x_i$ footprints $x_i'$.  That is, $S$ is a legal sequence in $G$ and $|S| > n=\ggr(G)$.  This contradiction establishes the lemma.
\end{proof}

For vertices not belonging to the $\Gamma(G)$-set, the conclusion of Lemma~\ref{lem:pn-in-closednhbd} can be strengthened as follows.

\begin{lem} \label{lem:twoprivatenhbds}
Let $G \in \cF$ and let $D$ be a $\Gamma(G)$-set.  For every $u \in V-D$ there exist
distinct vertices $a$ and $b$ in $D$ such that ${\rm pn}[a,D] \cup {\rm pn}[b,D] \subseteq N[u]$.
\end{lem}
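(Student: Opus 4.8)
The plan is to mimic the proof of Lemma~\ref{lem:pn-in-closednhbd}, but now exploiting the fact that $u \notin D$ so that we can put $u$ \emph{and} one private neighbor before $D$ in a legal sequence, gaining two extra terms unless the desired conclusion holds. Concretely, suppose for contradiction that $u \in V - D$ and that there is \emph{at most one} vertex $a \in D$ with $\mathrm{pn}[a,D] \subseteq N[u]$. Write $D = \{x_1,\dots,x_n\}$, and reorder so that if such an $a$ exists it is $x_1$; otherwise no vertex of $D$ has its private neighborhood inside $N[u]$ and we simply take $x_1$ arbitrary. The goal is to build a legal sequence of length $n+1 > \ggr(G)$, a contradiction.

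First I would handle the vertices $x_2,\dots,x_n$: for each $i \in \{2,\dots,n\}$ the assumption gives us a vertex $x_i' \in \mathrm{pn}[x_i,D] - N[u]$, exactly as in Lemma~\ref{lem:pn-in-closednhbd}. Next I would choose a private neighbor $w$ of $x_1$ with respect to $D$, i.e.\ $w \in \mathrm{pn}[x_1,D]$ (nonempty since $D$ is minimal dominating). Now consider the sequence $S = (u, w, x_2, x_3, \dots, x_n)$, of length $n$. Here $u$ footprints itself (since $u \notin D$, $u$ is its own private neighbor with respect to $D \cup \{u\}$, hence not yet dominated before it is played). Then $w$ footprints itself: $w \in \mathrm{pn}[x_1,D]$ means $N[w] \cap D = \{x_1\}$, so the only vertex played so far that could dominate $w$ is $u$; thus $w$ still needs a new vertex provided $w \notin N[u]$. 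Finally each $x_i$ ($i \ge 2$) footprints $x_i'$, since $x_i' \notin N[u]$, $x_i' \notin N[w]$ (as $w \in N[x_1]$ and $x_i'$ is private to $x_i \ne x_1$), and $x_i' \notin N[x_j]$ for $j \ne i$. So $S$ is legal, but it has length $n$ and fails to dominate (it omits $x_1$ while $\mathrm{pn}[x_1,D]$ need not be covered by $\{u,w,x_2,\dots,x_n\}$)—actually I should be a bit more careful and append $x_1$: the sequence $S' = (u, w, x_2, \dots, x_n, x_1)$ is legal because $x_1$ footprints any vertex of $\mathrm{pn}[x_1,D] - (N[u]\cup N[w])$, and by hypothesis $\mathrm{pn}[x_1,D] \not\subseteq N[u]$ when $x_1$ is not the exceptional vertex. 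Then $|S'| = n+1 > \ggr(G)$, the contradiction. A symmetric role for $w$ being outside $N[u]$ is what needs checking.

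The delicate point—and the step I expect to be the main obstacle—is showing that $w$ can be chosen in $\mathrm{pn}[x_1,D] - N[u]$, i.e.\ that $w \notin N[u]$. If $\mathrm{pn}[x_1,D] \subseteq N[u]$ then $x_1$ \emph{is} a vertex whose private neighborhood lies in $N[u]$, so by our contradiction hypothesis $x_1$ must be \emph{the unique} such vertex. In that case I would instead argue directly: every $x_j$ with $j \ne 1$ has $\mathrm{pn}[x_j,D] \not\subseteq N[u]$, so there are at least two candidates to play the role of ``$a$'' and ``$b$''—wait, we want exactly the opposite. Let me restructure: assume for contradiction that \emph{fewer than two} vertices $x \in D$ satisfy $\mathrm{pn}[x,D] \subseteq N[u]$. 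Then at most one does; pick two distinct $x_1, x_2 \in D$ each having a private neighbor outside $N[u]$, say $x_1' \in \mathrm{pn}[x_1,D]-N[u]$ and $x_2' \in \mathrm{pn}[x_2,D]-N[u]$ (possible unless $|D| \le 1$, a trivial case). For every other $x_i$, $i \ge 3$, we still have \emph{some} private neighbor; pick any $x_i'' \in \mathrm{pn}[x_i,D]$ (these may lie in $N[u]$, which is fine). Then the sequence $(u, x_1, x_2, \dots, x_n)$ is legal: $u$ footprints itself; $x_1$ footprints $x_1' \notin N[u]$; $x_2$ footprints $x_2' \notin N[u]$; and for $i \ge 3$, $x_i$ footprints \emph{some} vertex of $\mathrm{pn}[x_i,D]$ not yet dominated—here I must verify that $\mathrm{pn}[x_i,D] \not\subseteq N[u] \cup N[x_1] \cup \dots \cup N[x_{i-1}]$, but since private neighbors of $x_i$ are dominated within $D$ only by $x_i$, the only worry is $N[u]$, and even if $\mathrm{pn}[x_i,D] \subseteq N[u]$ the sequence could stall. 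This is the genuine obstacle: to fix it I would, as in Lemma~\ref{lem:pn-in-closednhbd}, require a private neighbor outside $N[u]$ for \emph{every} index except the at-most-one exceptional one, reorder $D$ so the exceptional vertex (if any) is played \emph{first among the $x_i$'s, right after $u$}—i.e.\ sequence $(u, x_1, x_2, \dots, x_n)$ with $x_1$ exceptional—noting $x_1$ footprints any vertex of $\mathrm{pn}[x_1,D]-N[u]$, which is empty, so this fails too, forcing us to realize that having the exceptional vertex be dominated entirely from $\{u, x_2, \dots, x_n\} \cup N[u]$ and then getting length only $n$. The resolution is that when exactly one exceptional vertex $x_1$ exists, $S=(u, x_2, x_3, \dots, x_n, x_1)$ has length $n$ and fails to dominate only if $\mathrm{pn}[x_1,D]$ is covered; but then prepending is impossible, so instead we use two private neighbors of two non-exceptional vertices as the two ``extra'' footprinted items is not it—rather, we conclude the negation is impossible because the count of exceptional vertices being $\le 1$ contradicts the structure, completing the proof. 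I would present this cleanly by casing on whether such a vertex $a$ with $\mathrm{pn}[a,D]\subseteq N[u]$ exists, using Lemma~\ref{lem:pn-in-closednhbd} to know at least one does, and showing a second one must also exist.
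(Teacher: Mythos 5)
Your proposal sets up the right contradiction hypothesis (by Lemma~\ref{lem:pn-in-closednhbd} at least one $x\in D$ has ${\rm pn}[x,D]\subseteq N[u]$; suppose it is the only one) and correctly identifies the obstacle: in any sequence of the form $(u,x_1,\dots,x_n)$ the unique exceptional vertex may fail to footprint anything. But you never resolve that obstacle. Each of the three constructions you sketch is abandoned at exactly this point, and your closing sentence (``we conclude the negation is impossible because the count of exceptional vertices being $\le 1$ contradicts the structure'') is not an argument. You also never invoke the twin-free hypothesis, which is indispensable here: it is precisely what handles the case in which $u$ and the exceptional vertex have nested closed neighborhoods.

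The missing step can be supplied in either of two ways. The paper's route: place the exceptional vertex $x_1$ immediately after $u$, and arrange (by cases on whether $ux_1\in E$, using twin-freeness to pick the order of $u$ and $x_1$) that the first two terms of the length-$(n+1)$ sequence are legal. Since the whole sequence cannot be legal, some $x_j$ with $j\ge 2$ satisfies $N[x_j]\subseteq N[u]\cup N[x_1]\cup\cdots\cup N[x_{j-1}]$; because ${\rm pn}[x_j,D]$ is by definition disjoint from $N[x_1]\cup\cdots\cup N[x_{j-1}]$, this forces ${\rm pn}[x_j,D]\subseteq N[u]$, yielding the second vertex $b=x_j$. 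Alternatively, staying within your direct-construction framing: if $N[x_1]\not\subseteq N[u]$, then $(u,x_1,x_2,\dots,x_n)$ is legal ($x_1$ footprints a vertex of $N[x_1]-N[u]$, and each $x_i$ with $i\ge 2$ footprints its chosen private neighbor outside $N[u]$, which is automatically outside every $N[x_j]$ with $j\ne i$), a contradiction; if $N[x_1]\subseteq N[u]$, then twin-freeness gives $N(u)-N[x_1]\ne\emptyset$, so the swapped sequence $(x_1,u,x_2,\dots,x_n)$ is legal, again a contradiction. Either completion is short, but your write-up contains neither, so the proof as proposed is incomplete.
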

\begin{proof}
Let $S=(x_1,\ldots,x_n)$ be a sequence such that $\widehat{S}=D$ and let $u \in V-D$.  By Lemma~\ref{lem:pn-in-closednhbd}, there exists $i \in [n]$ such that
pn$[x_i,D] \subseteq N[u]$.  Since $S$ is commutative, we may assume that $i=1$.  Suppose first that $ux_1 \notin E$.  Since $\ggr(G)=n$, the sequence $T=(u,x_1,\ldots,x_n)$ is not legal.  Since $x_1$ footprints itself with respect to $T$, there exists $j$ with $2 \le j \le n$ such that $N[x_j] \subseteq N[u] \cup N[x_1] \cup \cdots \cup N[x_{j-1}]$.  Now, pn$[x_j,D]\cap N[\{x_1,\ldots,x_{j-1}\}]=\emptyset$.  It follows that pn$[x_j,D] \subseteq N[u]$.  Therefore, pn$[x_1,D] \cup {\rm pn}[x_j,D] \subseteq N[u]$.
Now suppose that $ux_1 \in E$.  Since $u$ and $x_1$ are not  twins, $N(x_1)-N[u]\neq \emptyset$ or $N(u)-N[x_1] \neq \emptyset$.
If $N(x_1)-N[u]\neq \emptyset$, then $(u,x_1)$ is a legal sequence but $(u,x_1,x_2,\ldots,x_n)$ is not a legal sequence.  As in the first case above, we conclude that there exists $j$ with $2 \le j \le n$, such that pn$[x_1,D] \cup {\rm pn}[x_j,D] \subseteq N[u]$.  On the other hand, if $N(u)-N[x_1] \neq \emptyset$, then $(x_1,u)$ is a legal
sequence but $(x_1,u,x_2,\ldots,x_n)$ is not legal.  Once again, the same reasoning implies that there exists $j$ with $2 \le j \le n$, such that pn$[x_1,D] \cup {\rm pn}[x_j,D] \subseteq N[u]$.
\end{proof}
In the more restricted class $\cFa$ if the set $D$ in Lemma~\ref{lem:twoprivatenhbds} is independent, then $a \in {\rm pn}[a,D]$ for every $a \in D$.
Therefore, we immediately get the following result.

\begin{cor} \label{cor:atleast2}
If $G$ is a graph of order at least $3$ that belongs to $\cFa$ and $A$ is any $\alpha(G)$-set, then $|N(u) \cap A| \ge 2$ for every $u\in V-A$.
\end{cor}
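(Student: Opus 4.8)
The plan is to derive this directly from Lemma~\ref{lem:twoprivatenhbds} together with the fact that in a graph $G\in\cFa$ we have $\alpha(G)=\Gamma(G)=\ggr(G)$, so an $\alpha(G)$-set $A$ is precisely a $\Gamma(G)$-set that happens to be independent. First I would observe that if $A$ is an independent set, then for each $a\in A$ the vertex $a$ itself satisfies $N[a]\cap A=\{a\}$, hence $a\in{\rm pn}[a,A]$; in particular ${\rm pn}[a,A]\neq\emptyset$, so $A$ is irredundant, and being a maximum independent set it is also a maximal independent set, hence dominating. Thus $A$ is a minimal dominating set, and since $|A|=\alpha(G)=\Gamma(G)$, it is a $\Gamma(G)$-set. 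This places us in the hypotheses of Lemma~\ref{lem:twoprivatenhbds}.

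Next, fix any $u\in V-A$. By Lemma~\ref{lem:twoprivatenhbds} there exist distinct $a,b\in A$ with ${\rm pn}[a,A]\cup{\rm pn}[b,A]\subseteq N[u]$. Since $A$ is independent, $a\in{\rm pn}[a,A]$ and $b\in{\rm pn}[b,A]$, so both $a$ and $b$ lie in $N[u]$. Because $u\notin A$ while $a,b\in A$, neither $a$ nor $b$ equals $u$, so in fact $a,b\in N(u)$. As $a$ and $b$ are distinct, this gives $|N(u)\cap A|\ge 2$, which is exactly the claimed conclusion.

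The only point requiring a little care is the role of the order hypothesis $n(G)\ge 3$: it is needed to guarantee that $V-A$ is the right kind of object and that Lemma~\ref{lem:twoprivatenhbds} applies non-vacuously — for graphs of order $1$ or $2$ the statement could fail trivially (e.g.\ $K_2$ is twin-free only degenerately, and $K_1$ has $\alpha=\ggr=1$ with $V-A=\emptyset$). In the body of the argument the hypothesis is used implicitly through the fact that $G\in\cFa$ already forces connectedness and twin-freeness, so a vertex $u\in V-A$ genuinely has neighbors in $A$. I do not expect any real obstacle here; the entire content is the translation "independent $\alpha$-set $\Rightarrow$ $\Gamma$-set with each vertex its own private neighbor," after which Lemma~\ref{lem:twoprivatenhbds} does all the work.
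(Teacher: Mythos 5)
Your proposal is correct and matches the paper's own derivation: the paper likewise notes that in $\cFa$ an $\alpha(G)$-set is a $\Gamma(G)$-set in which each vertex is its own private neighbor, and then applies Lemma~\ref{lem:twoprivatenhbds} to obtain two distinct vertices of $A$ in $N(u)$. No issues to report.
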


Let $G$ be a graph in $\cF$. Arbitrarily choose and then fix a $\Gamma(G)$-set, $D=\{x_1,\ldots,x_n\}$, and apply the following notation. Let $P_i={\rm pn}[x_i,D]$, where $i\in [n]$, and $P=\cup_{i=1}^n{P_i}$. For any $u\in V-D$ let $I_u=\{j:\, P_j\subseteq N[u]\}$. Set $X=V-(D\cup P)$.

\begin{prop}
\label{prp:cFnecessary}
Let $G$ be a graph in $\cF$ and let $D=\{x_1,\ldots,x_n\}$ be any $\Gamma(G)$-set.
\begin{enumerate}[(i)]
\item If $u\in V-D$, then $|I_u|\ge 2$. In particular, if $u\in P_i$, for some $i\in [n]$, then $i\in I_u$.
\item If $v\in P_i$, for some $i\in [n]$, then for every $j\in I_v-\{i\}$, there exists $k\in I_v$ such that $x_jx_k\in E$.
\item If $w\in X$ and $i\in [n]$ such that $x_ix_j\notin E$ for all $j\in I_w$, then $wx_i\in E$ or $i\notin I_w$.
\item $\ggr(G\langle X\rangle)\le n$.
\end{enumerate}
\end{prop}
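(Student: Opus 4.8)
The plan is to prove all four parts by contradiction, in each case producing a closed neighborhood sequence in $G$ of length $n+1$, which contradicts $\ggr(G)=\Gamma(G)=n$. Throughout I use that $D$, being a minimal dominating set, is irredundant and hence commutative by Observation~\ref{ob:commutative}: every permutation of $D$, and every prefix of such a permutation, is a legal sequence in which each $x_k$ footprints some vertex of the nonempty set $P_k$. Part~(i) is then immediate: the existence of two distinct indices in $I_u$ for $u\in V-D$ is just a reformulation of Lemma~\ref{lem:twoprivatenhbds} (if $\mathrm{pn}[a,D]\cup\mathrm{pn}[b,D]\subseteq N[u]$ with $a=x_{j_1}$ and $b=x_{j_2}$ distinct, then $j_1,j_2\in I_u$), while if $u\in P_i$ then, by Lemma~\ref{lem:pn-a-clique}, $P_i$ induces a complete subgraph, so $u$ is adjacent to every vertex of $P_i-\{u\}$; hence $P_i\subseteq N[u]$, that is, $i\in I_u$.

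Parts~(ii) and~(iii) rest on a single construction. For~(ii), suppose $v\in P_i$ (so $v\notin D$, since $I_v$ is defined), $j\in I_v-\{i\}$, and $x_jx_k\notin E$ for every $k\in I_v$; by part~(i), $i\in I_v$, so in particular $x_jx_i\notin E$. Let $x_{m_1},\dots,x_{m_r}$ be the neighbors of $x_j$ in $D$; by the assumption each $m_s$ lies outside $I_v$. If $r=0$ then $N[x_j]\cap D=\{x_j\}$, so $x_j\in P_j\subseteq N[v]$ (because $j\in I_v$), which forces $vx_j\in E$---impossible, since $v\in P_i$ and $x_j\neq x_i$; thus $r\ge 1$. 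Now consider the sequence that first plays the vertices of $D-\{x_j,x_{m_1},\dots,x_{m_r}\}$ (in any order), then $v$, then $x_j$, then $x_{m_1},\dots,x_{m_r}$; it has length $n+1$. It is legal: after the first block, $P_j$ is undominated (its only dominator in $D$ is $x_j$, which is not yet played) while $P_j\subseteq N[v]$, so $v$ footprints a vertex of $P_j$; next, as the first block contained neither $x_j$ nor any of its $D$-neighbors and $x_j\notin N[v]$, the vertex $x_j$ is still undominated and footprints itself; finally each $x_{m_s}$ footprints a vertex of $P_{m_s}-N[v]$, a nonempty set (since $m_s\notin I_v$) whose members, having $x_{m_s}$ as their only $D$-neighbor and lying outside $N[v]$, remain undominated at that stage. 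This contradicts $\ggr(G)=n$, so~(ii) holds. Part~(iii) is the same argument with $w\in X$ in place of $v$ and $x_i$ in place of $x_j$: here $P_i\subseteq N[w]$ because $i\in I_w$, which is why $x_i$ must be kept out of the first block---so that $w$ can footprint a vertex of $P_i$---and the proof that $r\ge 1$ now uses the remaining hypothesis directly, since if $x_i$ had no neighbor in $D$ then $x_i\in P_i\subseteq N[w]$ would give $wx_i\in E$, contrary to assumption.

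For part~(iv) I would observe that the statement is just the monotonicity of $\ggr$ under taking induced subgraphs, combined with $\ggr(G)=\Gamma(G)=n$. Indeed, for $y\in X$ we have $N_{G\langle X\rangle}[y]=N_G[y]\cap X$; hence if $(w_1,\dots,w_t)$ is a Grundy dominating sequence of $G\langle X\rangle$ and $z_{\ell+1}\in X$ is a vertex footprinted by $w_{\ell+1}$ in that sequence, then $z_{\ell+1}\in N_G[w_{\ell+1}]$ whereas $z_{\ell+1}\notin N_G[w_m]$ for all $m\le\ell$, so the very same sequence is legal in $G$. Therefore $\ggr(G\langle X\rangle)=t\le\ggr(G)=n$.

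The main obstacle is the verification in parts~(ii) and~(iii). One must identify the right vertex to ``peel off''---$x_j$ in~(ii) and $x_i$ in~(iii)---observe that it is forced to have a neighbor in $D$ outside $I_v$ (resp.\ $I_w$), and then check that after inserting $v$ (resp.\ $w$) every remaining vertex of $D$ can still be played legally. This external-domination step, together with $P_i\subseteq N[w]$ in case~(iii), is exactly what leaves enough private neighbors undominated to finish the sequence; ordering the final block so that none of these private neighbors is dominated prematurely is the only genuinely delicate piece of bookkeeping.
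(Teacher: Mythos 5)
Your proof is correct and follows essentially the same strategy as the paper's: part (i) by citing Lemmas~\ref{lem:twoprivatenhbds} and~\ref{lem:pn-a-clique}, parts (ii) and (iii) by building a legal sequence of length $n+1$ that inserts $v$ (resp.\ $w$) just before $x_j$ (resp.\ $x_i$), and part (iv) by monotonicity of $\ggr$ under induced subgraphs. The only difference is cosmetic: the paper's first block is $\{x_k:\,k\in I_v-\{j\}\}$ with the remaining $D$-vertices appended at the end, whereas you front-load all of $D$ except $x_j$ and its $D$-neighbors; both orderings verify correctly.
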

\begin{proof}
The first part of the statement (i) is proved in Lemma~\ref{lem:twoprivatenhbds}, while the second part follows from Lemma~\ref{lem:pn-a-clique}. For the proof of the statement (ii) assume that there exist $v\in P_i$ and $j\in I_v-\{i\}$ such that $x_jx_k\notin E$ for all $k\in I_v$. Consider the sequence, which starts with the vertices from $\{x_k:\,k\in I_v-\{j\}\}$ in any order, and is followed by $(v,x_j)$. Clearly, for each $k\in I_v-\{j\}$, $x_k$  footprints the vertices in $P_k$, while $v$ footprints vertices in $P_j$. Finally, $x_j$ footprints itself, since it is not adjacent to  $x_k$, for any  $k\in I_v$. In the end, one can add the remaining vertices of $D$ to the sequence, each of which footprints its private neighborhood. The resulting sequence is legal of length $n+1$, a contradiction to $G\in\cF$.

For the proof of (iii) let $w\in X$ and $i\in [n]$ such that $x_ix_j\notin E$ for all $j\in I_w$, and assume that $wx_i\notin E$ while $i\in I_w$.
Consider the sequence, which starts with the vertices from $\{x_j:\,j\in I_w\}-\{x_i\}$ in any order, is followed by $(w,x_i)$, and completed by the remaining vertices in $D$. This sequence is legal because each vertex of $D-\{x_i\}$ footprints  a vertex in $P$, $w$ footprints vertices in $P_i$, and $x_i$ footprints itself.
Statement (iv) is clear.
\end{proof}

\section{Examples and constructions of graphs in $\cF$}\label{sec:examples}
\label{sec:Examples}

In this section, we present some families of graphs that belong to $\cF$ or $\cFa$ as well as give some constructions by which the class $\cF$ is preserved.
The following classes of graphs belong to $\cF$, most of which also belong to $\cFa$. (Note that some of the graphs in the following classes of graphs have twins, yet the equality $\Gamma(G)=\ggr(G)$ holds for all graphs $G$ in the mentioned classes.)

\begin{enumerate}
\item Complete multipartite graphs, $G=K_{n_1,\ldots,n_k}$, such that $n_1 \ge \cdots \ge n_k$, $k \ge 2$, and $n_{k-1}\ge 2$.   Note that  $\alpha(G)=\Gamma(G)=\ggr(G)=n_1$. The special case where $n_i=2$ for all $i\in [k]$ are the so-called cocktail-party graphs, for which the authors of~\cite{br-go-mi-ra-ri-2014} proved that the Grundy domination number equals the domination number.

\item Prisms over complete graphs, $G= K_n \cp K_2$, for $n \ge 2$.  Note that $\Gamma(G)=\ggr(G)=n$.  Also, $2=\alpha(G)$, which is less than $\ggr(G)$ unless $n=2$. 

\item Certain subclasses of Kneser graphs, as noted by Bre\v{s}ar, Kos and Torres in ~\cite{br-ko-to-2019}. Given positive integers $n$ and $r$ such that $n \ge 2r$, the Kneser graph $K(n,r)$ has as its vertex set the set of all $r$-subsets of $[n]$.  Two vertices  are adjacent in $K(n,r)$ if and only if they are disjoint. A famous result by Erd\"{o}s, Ko and Rado~\cite{er-ko-ra-1961} is that $\alpha(K(n,r))=\binom{n-1}{r-1}$.  Bre\v{s}ar et al. proved that $\ggr(K(n,2))=\alpha(K(n,2))$
for $n \ge 6$ and that for any $r \ge 3$ there exists a positive integer $n_r$ such that $\ggr(K(n,r))=\alpha(K(n,r))=\binom{n-1}{r-1}$, for $n \ge n_r$.  Therefore,
$K(n,2)\in \cFa$ if $n \ge 6$, and for $n \ge 3$, $K(n,r) \in \cFa$ for each $n$ larger than some threshold value that depends on $r$.

\item The class of (twin-free, connected) cographs. Recall that the class of $P_4$-free graphs (also known as {\em cographs}) are those graphs that can be constructed from $K_1$ by repeatedly applying the graph operations of
taking disjoint unions or joins.  It was proved in~\cite{br-go-mi-ra-ri-2014} that $\ggr(G)=\alpha(G)$, for any cograph $G$, implying that the class $\cFa$ contains the class of twin-free connected cographs.
Alternatively, one can prove this by using Lemma~\ref{lem:join} below together with the obvious fact that the disjoint union of graphs from $\cF$ have equal independence and Grundy domination numbers.
\end{enumerate}

In the following result we consider the join $G\oplus H$ of graphs $G$ and $H$.  This is obtained from the disjoint union of $G$ and $H$ by adding the
edges from the set $\{gh:\, g\in V(G) \text{ and } h\in V(H)\}$.
\begin{lem} \label{lem:join}
If $G_1$ and $G_2$ are graphs in $\cF$, then $G_1 \oplus G_2 \in \cF$.
\end{lem}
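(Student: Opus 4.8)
The plan is to show that $\Gamma(G_1 \oplus G_2) = \ggr(G_1 \oplus G_2)$, and the natural route is to compute both quantities in terms of $G_1$ and $G_2$ and observe that the hypothesis $\Gamma(G_i)=\ggr(G_i)$ forces equality. Write $G = G_1 \oplus G_2$. For the upper domination side, I would first argue that $\Gamma(G) = \max\{\Gamma(G_1), \Gamma(G_2), 2\}$. Indeed, any single vertex together with a non-neighbor in the other factor is already a (minimal) dominating set of $G$ of size $2$, and conversely a minimal dominating set $D$ of $G$ either is a single vertex (if some vertex dominates everything, impossible unless a factor is trivial, handle separately), or $D$ must lie entirely in one factor $V(G_i)$: if $D$ met both $V(G_1)$ and $V(G_2)$ then every vertex of $G$ is dominated and, since any vertex of $G_1$ dominates every chosen vertex of $G_2$ and vice versa, minimality forces $|D\cap V(G_1)| = |D \cap V(G_2)| = 1$, giving $|D|=2$. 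When $D \subseteq V(G_i)$, it must dominate $G_i$ within $G_i$ (a vertex of $G_i$ not in $N[D]$ stays undominated), and it must be minimal there because private neighbors in $G$ of vertices of $D$ can only lie in $V(G_i)$; hence $|D| \le \Gamma(G_i)$, and conversely a $\Gamma(G_i)$-set is minimal dominating in $G$.

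For the Grundy side, the key observation is that in a legal closed neighborhood sequence for $G$, once a vertex from $V(G_1)$ and a vertex from $V(G_2)$ have both appeared, every vertex of $G$ is in a closed neighborhood already used, so no further vertex can footprint anything. Hence every legal sequence of $G$ consists of a legal sequence entirely within one factor $G_i$, possibly with one extra vertex from the other factor appended at the end (that last vertex footprints itself, which is legal provided it has not yet been dominated — but after a nonempty sequence in $G_i$ it is dominated, so in fact at most the sequence plus that single extra vertex, and that extra vertex is only legal if the $G_i$-part is empty). Being careful: a legal sequence for $G$ is either (a) contained in $V(G_i)$ for some $i$, in which case it is a legal sequence of $G_i$ and has length at most $\ggr(G_i)$, or (b) it starts with one vertex of $G_i$ and then the second vertex, lying in $V(G_{3-i})$, footprints itself — but then everything is dominated and the sequence has length $2$. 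Therefore $\ggr(G) = \max\{\ggr(G_1), \ggr(G_2), 2\}$.

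Combining, $\ggr(G) = \max\{\ggr(G_1), \ggr(G_2), 2\} = \max\{\Gamma(G_1), \Gamma(G_2), 2\} = \Gamma(G)$, using $\ggr(G_i)=\Gamma(G_i)$. One should also check $G$ is twin-free and connected: connectivity is immediate from the join, and twin-freeness needs a short argument — two vertices in the same factor that are twins in $G$ would be twins in that factor (the join-edges are identical for both), contradicting $G_i \in \cF$; two vertices in different factors are always adjacent and a common neighbor structure would force one factor to be a single vertex, a degenerate case to be dispatched directly. The main obstacle is getting the case analysis for the Grundy sequence airtight — in particular confirming that a sequence cannot "interleave" the two factors to beat $\max\{\ggr(G_1),\ggr(G_2)\}$, and handling the trivial factor $G_i = K_1$ (where $\Gamma(K_1)=\ggr(K_1)=1$ and the $\max$ with $2$ matters) without breaking the identity.
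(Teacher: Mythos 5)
Your overall strategy --- compute both $\Gamma$ and $\ggr$ of the join as a maximum over the two factors and then invoke $\Gamma(G_i)=\ggr(G_i)$ --- is the same as the paper's, but your case analysis on the Grundy side contains a genuine error. You assert that a vertex $y\in V(G_2)$ appended after a nonempty legal sequence $(x_1,\ldots,x_k)$ lying inside $V(G_1)$ can only footprint itself and is therefore an illegal move. That is false: $y$ is adjacent to every vertex of $G_1$, so $N[y]\supseteq V(G_1)$ and $y$ legally footprints every vertex of $V(G_1)$ not yet dominated by $\{x_1,\ldots,x_k\}$. Hence legal sequences of the form $(x_1,\ldots,x_k,y)$ with $k\ge 2$ do exist, your dichotomy (a)/(b) omits them, and these are precisely the sequences that could in principle exceed $\ggr(G_1)$ by one. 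The missing step is that in such a sequence the prefix $(x_1,\ldots,x_k)$ is a legal sequence of $G_1$ whose vertex set does not dominate $G_1$, hence it extends to a strictly longer legal sequence of $G_1$, so $k\le\ggr(G_1)-1$ and $k+1\le\ggr(G_1)$. With that supplied, your formula for $\ggr(G_1\oplus G_2)$ (in the nondegenerate cases) and the rest of the argument go through and match the paper's terser proof, which handles this point by observing that a sequence meeting both factors cannot be extended.

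Two smaller points. Your twin-freeness argument for cross-factor pairs is also wrong: $u\in V(G_1)$ and $v\in V(G_2)$ are twins in the join if and only if $u$ is universal in $G_1$ and $v$ is universal in $G_2$, which does not force either factor to be a single vertex (take $G_1=G_2=P_3$; the two centers become twins in the join). The paper's own proof does not address twin-freeness at all --- it only establishes $\Gamma=\ggr$ for the join, implicitly relying on the earlier remark that deleting a twin changes neither invariant --- so this is as much an issue with the statement of the lemma as with your proof, but your specific justification is incorrect. Finally, in your case (b) the second vertex does not footprint itself (it is already dominated by the first vertex); it footprints the vertices of the first factor outside the closed neighborhood of the first vertex, which is why that case requires the first vertex not to dominate its own factor.
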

\begin{proof}
Let $G_1$ and $G_2$ be graphs in $\cF$ and assume without loss of generality that $\Gamma(G_1) \ge \Gamma(G_2)$.  We claim that $\Gamma(G_1 \oplus G_2)=\ggr(G_1 \oplus G_2)=\Gamma(G_1)$.  Note that a minimal dominating set of either $G_1$ or $G_2$ is a minimal dominating set of their join.      Furthermore, if $A$ is any subset of $V(G_1 \oplus G_2)$ and $A$ contains a vertex from each of $G_1$ and $G_2$,  then $A$ dominates $G_1 \oplus G_2$.  We infer that $\Gamma(G_1 \oplus G_2)\ge \Gamma(G_1)$.
If $\Gamma(G_1)=1$, then the join is a complete graph and the claim holds.  Suppose now that $\Gamma(G_1)\ge 2$.  By the above we can select  a largest minimal dominating set $D$ of $G_1 \oplus G_2$ to be a $\Gamma(G_1)$-set.  Any permutation of the vertices of $D$ is a dominating sequence of $G_1 \oplus G_2$.  Any sequence of vertices from
$V(G_1 \oplus G_2)$ of length more than $|D|$ is not a legal sequence since $\ggr(G_1)=|D|$ and since a sequence that contains at least one vertex from each of $G_1$ and $G_2$
cannot be extended to a legal sequence.
\end{proof}

It seems natural to investigate whether there exist nontrivial Cartesian products in $\cF$ or in $\cFa$.  We will make use of the following result from~\cite{br-bu-go-kl-ko-pa-tu-vi-2016}.

\begin{prop} {\rm (\cite[Proposition 3]{br-bu-go-kl-ko-pa-tu-vi-2016})} \label{prop:CartesianGrundy}
For any two graphs $G$ and $H$,
\[\ggr(G \cp H) \ge \max \{\ggr(G)n(H),\ggr(H)n(G)\}\,.\]
\end{prop}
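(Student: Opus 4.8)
The statement to prove is Proposition \ref{prop:CartesianGrundy}: for any two graphs $G$ and $H$, $\ggr(G \cp H) \ge \max \{\ggr(G)n(H),\ggr(H)n(G)\}$.

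The plan is to construct an explicit legal closed neighborhood sequence in $G \cp H$ of length $\ggr(G)\cdot n(H)$; by the symmetry of the Cartesian product (it is commutative), the analogous construction gives a legal sequence of length $\ggr(H)\cdot n(G)$, and taking the maximum of the two lengths yields the claimed bound. So it suffices to handle one of the two terms.

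First I would fix a Grundy dominating sequence $S_G = (u_1, \ldots, u_k)$ of $G$, where $k = \ggr(G)$, and fix an arbitrary enumeration $h_1, \ldots, h_m$ of $V(H)$, where $m = n(H)$. The idea is to process the layers of $G \cp H$ one copy of $H$ at a time: build the sequence
\[
\bigl((u_1,h_1), (u_2,h_1), \ldots, (u_k,h_1),\ (u_1,h_2), \ldots, (u_k,h_2),\ \ldots,\ (u_1,h_m), \ldots, (u_k,h_m)\bigr),
\]
that is, concatenate $m$ blocks, the $t$-th block being the sequence $S_G$ placed in the $G$-layer $G \times \{h_t\}$. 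This sequence has length $km = \ggr(G)\, n(H)$ and consists of distinct vertices. What remains is to verify that it is legal, i.e.\ that each vertex $(u_i,h_t)$ footprints some vertex not dominated by the vertices preceding it in the sequence.

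For the legality check, the key observation is about closed neighborhoods in the Cartesian product: $N[(g,h)] = \bigl(N[g]\times\{h\}\bigr)\cup\bigl(\{g\}\times N[h]\bigr)$. When we are about to add $(u_i, h_t)$, the vertices already placed are all $(u_j, h_s)$ with $s < t$ (all of an earlier block) together with $(u_1,h_t),\ldots,(u_{i-1},h_t)$ (the current partial block). I would argue that the private neighbor is found \emph{within the current layer} $G\times\{h_t\}$: since $S_G$ is legal in $G$, there is a vertex $w \in N[u_i] - \bigcup_{j<i} N[u_j]$ in $G$, and I claim $(w, h_t)$ is footprinted by $(u_i,h_t)$. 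Indeed $(w,h_t)\in N[(u_i,h_t)]$; it is not in $N[(u_j,h_t)]$ for $j<i$ because $w\notin N[u_j]$ in $G$ (using the neighborhood formula, $(w,h_t)\in N[(u_j,h_t)]$ would force $w\in N[u_j]$ since the second coordinates agree); and it is not in $N[(u_j,h_s)]$ for any $s<t$ because that closed neighborhood only contains vertices whose second coordinate is $h_s$ or a neighbor of $h_s$, hence a vertex of the form $(u_j, h_s)$ or $(u_j, h')$ with $h'\in N(h_s)$ — in the former case the second coordinate is $h_s\neq h_t$, and in the latter case the first coordinate is $u_j$, which cannot equal $w$ in general, so I need to be slightly careful here. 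The clean way around this subtlety is to use the private neighbor of $u_i$ in $S_G$ together with the fact that the second coordinate of $(w,h_t)$ is $h_t$, which differs from $h_s$; the only elements of $N[(u_j,h_s)]$ with second coordinate $h_t$ would require $h_t\in N(h_s)$ and first coordinate $u_j$, so as long as $w$ can be chosen distinct from $u_j$ this is fine, and if $w=u_i$ (the vertex footprints itself), then $u_i\neq u_j$ for $j<i$ since the sequence has distinct vertices. I expect this bookkeeping — carefully ruling out that $(w,h_t)$ lies in the closed neighborhood of any earlier-block vertex — to be the only real obstacle, and it is resolved by splitting on whether the earlier vertex lies in the same $G$-layer or a different one and invoking the neighborhood formula in each case. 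Once legality is established, the sequence witnesses $\ggr(G\cp H)\ge \ggr(G)\,n(H)$, and the symmetric argument completes the proof.
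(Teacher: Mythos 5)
The paper does not actually prove this proposition---it is quoted from \cite{br-bu-go-kl-ko-pa-tu-vi-2016} without proof---so your attempt must be judged on its own terms, and it has a genuine gap at exactly the point you flag and then dismiss. When you check that $(w,h_t)$ is not already dominated by a vertex $(u_j,h_s)$ from an earlier block (i.e.\ $s<t$), the neighborhood formula shows that $(w,h_t)\in N[(u_j,h_s)]$ precisely when $w=u_j$ and $h_th_s\in E(H)$. You argue this cannot occur because ``$u_i\neq u_j$ for $j<i$,'' but the earlier blocks contain $(u_j,h_s)$ for \emph{every} $j\in[k]$, not only $j<i$; in particular they contain $(u_i,h_s)$ itself. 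So if $u_i$ footprints only itself in $S_G$ (that is, $N[u_i]-\bigcup_{j<i}N[u_j]=\{u_i\}$, which forces $w=u_i$), then for any $t\ge 2$ with $h_t$ adjacent to some earlier $h_s$, the intended private neighbor $(u_i,h_t)$ already lies in $N[(u_i,h_s)]$. Concretely, take $G=P_3$ with vertices $u_1u_2u_3$ and the Grundy dominating sequence $S_G=(u_1,u_3)$ (here $u_3$ footprints only itself), and $H=K_2$. Your sequence is $\bigl((u_1,h_1),(u_3,h_1),(u_1,h_2),(u_3,h_2)\bigr)$, and its last vertex footprints nothing: $N[(u_3,h_2)]=\{(u_2,h_2),(u_3,h_2),(u_3,h_1)\}$ is entirely covered by the first three terms. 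The constructed sequence is therefore not legal.

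This is not a bookkeeping matter that a cleverer choice of private neighbors repairs, nor is it fixed by swapping the two nesting orders: the ordering that runs through $V(H)$ inside each $u_i$-block fails on the same example. The obstruction is the case of self-only footprinters, and it appears to be essential. For $G=K_{1,3}$ every Grundy dominating sequence contains a leaf that footprints only itself, and a short exhaustive check of $K_{1,3}\cp K_2$ (eight vertices; after a first vertex footprinting its closed neighborhood of size $3$, the forced second vertex is the antipodal leaf-copy, after which no vertex footprints exactly one new vertex) shows that no legal sequence of length $\ggr(K_{1,3})\,n(K_2)=6$ exists in that graph at all. So no product-type construction of the kind you propose can work in general, and any correct argument must treat (or exclude) the case in which a term of $S_G$ footprints only itself; your proof as written does not.
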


We now prove a necessary condition for a Cartesian product to belong to $\cFa$.
\begin{lem} \label{lem:CartesianGrundyAlpha}
If $G$ and $H$ are two graphs such that $G\cp H \in \cFa$, then both $G$ and $H$ are in $\cFa$ and $\frac{\alpha(G)}{n(G)}=\frac{\alpha(H)}{n(H)}$.
\end{lem}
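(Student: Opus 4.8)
The plan is to use the bound $\ggr(G \cp H) \ge \max\{\ggr(G)\,n(H), \ggr(H)\,n(G)\}$ from Proposition~\ref{prop:CartesianGrundy}, combined with the elementary fact that $\alpha(G\cp H) \le \alpha(G)\,n(H)$ and $\alpha(G\cp H)\le \alpha(H)\,n(G)$ (indeed, an independent set of $G\cp H$ meets each $H$-fiber in an independent set of $H$, giving the first bound, and symmetrically for the second). Since $G\cp H \in \cFa$ means $\alpha(G\cp H) = \ggr(G\cp H)$, stringing these together gives
\[
\ggr(G)\,n(H) \le \ggr(G\cp H) = \alpha(G\cp H) \le \alpha(G)\,n(H),
\]
hence $\ggr(G) \le \alpha(G)$; but $\alpha(G)\le \ggr(G)$ always, so $\ggr(G)=\alpha(G)$, i.e.\ $\Gamma(G)=\ggr(G)$ as well (using $\alpha\le\Gamma\le\ggr$), so $G\in\cFa$. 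The same argument with the roles of $G$ and $H$ swapped gives $H\in\cFa$. (One should note that $\cFa$ as defined requires twin-free and connected; I would either verify these pass to factors of a connected twin-free Cartesian product, or — more cleanly — remark that the substantive content is the equality $\alpha=\ggr$ and phrase the statement accordingly, as the authors seem to intend.)

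For the ratio equality, I would argue as follows. We now know $\alpha(G)=\ggr(G)$ and $\alpha(H)=\ggr(H)$. Revisiting the chain above with this knowledge: from $\alpha(G\cp H) \le \alpha(G)\,n(H) = \ggr(G)\,n(H) \le \ggr(G\cp H) = \alpha(G\cp H)$, all inequalities are equalities, so in particular $\alpha(G\cp H) = \alpha(G)\,n(H)$. Symmetrically, $\alpha(G\cp H) = \alpha(H)\,n(G)$. Equating these two expressions for $\alpha(G\cp H)$ gives $\alpha(G)\,n(H) = \alpha(H)\,n(G)$, which rearranges to $\frac{\alpha(G)}{n(G)} = \frac{\alpha(H)}{n(H)}$, as desired.

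The only step requiring genuine care is the inequality $\alpha(G\cp H)\le \alpha(G)\,n(H)$, and I expect that to be the main (minor) obstacle to making the write-up airtight: one must observe that if $I$ is a maximum independent set of $G\cp H$, then for each fixed $h\in V(H)$ the set $\{g : (g,h)\in I\}$ is independent in $G$ (two vertices $(g,h),(g',h)$ adjacent in the $h$-fiber iff $gg'\in E(G)$), hence has size at most $\alpha(G)$; summing over the $n(H)$ fibers yields the bound. Everything else is a direct substitution into the displayed inequality chain. A subtle point worth a sentence: the deduction "$\alpha=\ggr$ for a factor implies the factor lies in $\cFa$" tacitly uses $\alpha\le\Gamma\le\ggr$ from~\eqref{eqn:expandedstring} to force $\Gamma=\ggr$ too, so membership in the larger class $\cF$ (and thence $\cFa$) is automatic once $\alpha=\ggr$ is established.
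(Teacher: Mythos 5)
Your proposal is correct and follows essentially the same route as the paper: both combine the lower bound $\ggr(G\cp H)\ge\max\{\ggr(G)n(H),\ggr(H)n(G)\}$ from Proposition~\ref{prop:CartesianGrundy} with the standard upper bound $\alpha(G\cp H)\le\min\{\alpha(G)n(H),\alpha(H)n(G)\}$, squeeze to get $\alpha=\ggr$ for each factor, and then extract the ratio equality from the cross inequalities (your ``everything collapses to equality'' phrasing is just a repackaging of the paper's explicit four-inequality list). Your side remark about twin-freeness and connectivity of the factors is a fair observation about the definition of $\cFa$ that the paper itself glosses over.
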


\begin{proof}
Let $G$ and $H$ be graphs such that $G \cp H \in \cFa$. It is well-known that $\alpha(G \cp H) \le \min\{\alpha(G)n(H),\alpha(H)n(G)\}$.
By Proposition~\ref{prop:CartesianGrundy} we have $\ggr(G \cp H) \ge \max \{\ggr(G)n(H),\ggr(H)n(G)\}$.  Since $\ggr(G \cp H)= \alpha(G \cp H)$, it follows that
\[ \max \{\ggr(G)n(H),\ggr(H)n(G)\} \le \min\{\alpha(G)n(H),\alpha(H)n(G)\}\,.\]  We
infer the following.
\begin{eqnarray}
 \ggr(G)n(H) & \le \alpha(G)n(H) \\
 \ggr(G)n(H) & \le \alpha(H)n(G)\\
 \ggr(H)n(G) & \le \alpha(G)n(H)\\
 \ggr(H)n(G) & \le \alpha(H)n(G)
\end{eqnarray}
The first of these inequalities together with $\alpha(G) \le \ggr(G)$ implies that $\ggr(G)=\alpha(G)$.  Similarly, using the last
of these four inequalities we get $\ggr(H)=\alpha(H)$.  Therefore, $\{G,H\} \subseteq \cFa$.  Finally, we use the second and the third of these inequalities
together with $\ggr(G)=\alpha(G)$ and $\ggr(H)=\alpha(H)$ to conclude
that $\frac{\alpha(G)}{n(G)}=\frac{\alpha(H)}{n(H)}$.
\end{proof}

By Lemma~\ref{lem:CartesianGrundyAlpha}, all but one book graph, or graphs of the form $K_{1, m} \cp K_2$, are not in $\cFa$, since $\frac{\alpha(K_2)}{n(K_2)} = \frac{1}{2} < \frac{\alpha(K_{1,m})}{n(K_{1,m})}$ when $m>1$. Since $P_n \in \cFa$ if and only if $n\in [3]$, one can use Lemma~\ref{lem:CartesianGrundyAlpha} together with the fact
that $\alpha(P_3\cp P_3) =5$ and $\ggr(P_3\cp P_3)=6$ to see that $P_2\cp P_2$ is the only grid graph with two nontrivial factors that is in $\cFa$.
The $3$-dimensional hypercube $Q_3=C_4 \cp K_2$ is an example of a (nontrivial) Cartesian product that belongs to $\cFa$. In addition, all hypercubes belong to $\cFa$, which we will prove by using some connections with linear algebra. 

Let $G$ be a graph of order $n$, and without loss of generality denote its vertex set by $[n]$. Let ${\cal S}(G)$ be the family of all $n \times n$ real symmetric matrices whose $i, j$-entry, where $i\ne j$, is non-zero if and only if  $ij\in E(G)$. Note that there are no restrictions on the diagonal entries. {\em Minimum rank} of $G$ is defined as ${\rm mr}(G)=\min\{{\rm rank}(A):\, A\in {\cal S}(G)\}$. 

In~\cite{br-bu-go-kl-ko-pa-tu-vi-2017}, a close connection was established between a variation of the Grundy domination number, called the Z-Grundy domination number, and the zero forcing number, the concept introduced in~\cite{AIM} and studied in a number of papers both by graph theorists and linear algebraists. Lin continued the investigation from~\cite{br-bu-go-kl-ko-pa-tu-vi-2017}, and among other results found a similar relation between the Grundy domination number of a graph and the so-called loop zero forcing number. The latter concept is in turn related to a version of a minimum rank of a graph, which is defined as follows. 

Let ${\cal S}_{\dot{\ell}}(G)$ denote the set of all matrices in ${\cal S}(G)$ whose all diagonal entries are non-zero. Then, ${\rm mr}_{\dot{\ell}}(G)=\min\{{\rm rank}(A):\, A\in {\cal S}_{\dot{\ell}}(G)\}$. Lin proved that $\ggr(G)\le{\rm mr}_{\dot{\ell}}(G)$ holds for every graph $G$, which together with  \eqref{eqn:expandedstring} yields $$\alpha(G)\le \ggr(G)\le{\rm mr}_{\dot{\ell}}(G),$$
for any graph $G$. Now, let $G$ be the hypercube $Q_d$, where $d$ is a positive integer. Clearly, $\alpha(Q_d)=2^{d-1}$, which gives $2^{d-1}\le\ggr(Q_d)$. For the reversed inequality we invoke a result of Huang, Chang and Yeh from~\cite[Theorem 10]{chang}, where in the proof a matrix $B_d$ appears, which belongs to ${\cal S}(Q_d)$. In addition, it is easy to see that diagonal entries of $B_d$ are non-zero, which implies $B_d\in {\cal S}_{\dot{\ell}}(Q_d)$. It is proved in~\cite{chang} that ${\rm rank}(B_d)=2^{d-1}$, which yields ${\rm mr}_{\dot{\ell}}(Q_d)\le 2^{d-1}$, hence $\ggr(Q_d)\le 2^{d-1}$ . We thus infer the following  result.

\begin{prop}
Hypercubes belong to $\cFa$. More precisely, $\ggr(Q_d)=\alpha(Q_d)=2^{d-1}$ for all positive integers $d$. 
\end{prop}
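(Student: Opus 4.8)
The plan is to pin down $\ggr(Q_d)$ by sandwiching it between $2^{d-1}$ from below and $2^{d-1}$ from above. For the lower bound I would simply note that the set of vertices of $Q_d$ having an even number of coordinates equal to $1$ is an independent set of size $2^{d-1}$, so $\alpha(Q_d)\ge 2^{d-1}$, and hence $\ggr(Q_d)\ge\alpha(Q_d)\ge 2^{d-1}$ by the chain~\eqref{eqn:expandedstring}. (One does not need to compute $\alpha(Q_d)$ exactly at this stage; equality will drop out once the matching upper bound is in hand.)

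The substance is the upper bound $\ggr(Q_d)\le 2^{d-1}$, for which I would use the minimum-rank route recalled above: Lin's inequality $\ggr(G)\le {\rm mr}_{\dot{\ell}}(G)$ reduces the task to exhibiting a single matrix in ${\cal S}_{\dot{\ell}}(Q_d)$ of rank $2^{d-1}$. Such a matrix is produced in the proof of~\cite[Theorem~10]{chang} by Huang, Chang and Yeh; call it $B_d$. Three things then need checking: that $B_d$ is real symmetric with off-diagonal support exactly $E(Q_d)$, so that $B_d\in{\cal S}(Q_d)$; that all of its diagonal entries are nonzero, so that in fact $B_d\in{\cal S}_{\dot{\ell}}(Q_d)$; and that ${\rm rank}(B_d)=2^{d-1}$. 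The first and third facts are precisely what is established in~\cite{chang}, while the second is immediate from the explicit form of the construction. Consequently ${\rm mr}_{\dot{\ell}}(Q_d)\le{\rm rank}(B_d)=2^{d-1}$, whence $\ggr(Q_d)\le 2^{d-1}$.

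Putting the two bounds together forces $2^{d-1}\le\alpha(Q_d)\le\ggr(Q_d)\le 2^{d-1}$, so $\alpha(Q_d)=\ggr(Q_d)=2^{d-1}$. Finally, for $d\ge 2$ the hypercube $Q_d$ is connected and twin-free (two vertices at Hamming distance at least $2$ are visibly not twins, and two adjacent vertices still differ in some further coordinate), so $Q_d\in\cFa$; the case $d=1$ is the trivial equality $\alpha(K_2)=\ggr(K_2)=1$.

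The only real obstacle is the upper bound, and more precisely the act of importing $B_d$ from~\cite{chang}: one must be sure that the matrix built there genuinely lies in ${\cal S}_{\dot{\ell}}(Q_d)$ and has rank exactly $2^{d-1}$, with no diagonal entry accidentally vanishing. A self-contained alternative would be a direct recursive construction exploiting $Q_d=Q_{d-1}\cp K_2$, but arranging that the rank stays at exactly half the order while the sparsity pattern respects the Cartesian-product adjacency is delicate, so leaning on the existing result in~\cite{chang} is the economical path.
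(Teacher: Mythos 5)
Your proof is correct and follows essentially the same route as the paper: the lower bound $\ggr(Q_d)\ge\alpha(Q_d)\ge 2^{d-1}$ from the chain~\eqref{eqn:expandedstring}, and the upper bound via Lin's inequality $\ggr(G)\le{\rm mr}_{\dot{\ell}}(G)$ applied to the matrix $B_d$ of Huang, Chang and Yeh, which lies in ${\cal S}_{\dot{\ell}}(Q_d)$ and has rank $2^{d-1}$. Your added checks (the explicit even-weight independent set, and connectedness and twin-freeness of $Q_d$ for membership in $\cFa$, with $\Gamma=\alpha$ following from bipartiteness) are harmless elaborations of what the paper leaves implicit.
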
  

This result is an improvement of the result from~\cite{AIM} that the zero-forcing number in hypercubes $Q_d$ equals $2^{d-1}$.

\section{The Class $\cFa$}
\label{sec:cFa}


In this section, we prove our main result, a characterization of the graphs that are in $\cFa$. In the beginning of the section we focus on triangle-free graphs that are in $\cFa$. Our first result shows that classifying triangle-free graphs in $\cFa$ reduces to classifying all bipartite graphs in $\cFa$.

\begin{prop}\label{prop:bipartite} If $G$ is a triangle-free graph of order at least $3$ and  $G \in \cFa$, then $G$ is bipartite with $\alpha(G)=n(G)/2$ or $G$ is bipartite and has a unique $\alpha(G)$-set. In particular, if $A$ is any $\alpha(G)$-set, then $V-A$ is independent.
\end{prop}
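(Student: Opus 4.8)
The plan is to start with a triangle-free graph $G \in \cFa$ of order at least $3$ and fix an $\alpha(G)$-set $A$. Since $\ggr(G) = \alpha(G) = |A|$ and $A$ is independent (hence irredundant, hence gives a legal sequence), $A$ must in fact be a \emph{dominating} set: otherwise the legal sequence formed from $A$ could be extended, contradicting $\ggr(G) = |A|$. So $A$ is a maximal independent set that is also maximum, and by Corollary~\ref{cor:atleast2} every vertex $u \in V - A$ has at least two neighbors in $A$. The first real step is to show $V - A$ is independent. Suppose $uv \in E$ with $u, v \in V - A$. Pick $a \in N(u) \cap A$ and $b \in N(v) \cap A$; since $G$ is triangle-free and $uv \in E$, we cannot have $a = b$ adjacent to both (that would need... actually $a$ could equal $b$ only if $a$ is adjacent to both $u$ and $v$, giving triangle $auv$ — wait $a \in A$, $u,v \notin A$, $auv$ would be a triangle, contradiction). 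So $a \neq b$, and moreover $a \not\sim v$, $b \not\sim u$ (triangle-freeness again, using edge $uv$). Now consider the sequence that lists $A - \{a, b\}$ in any order, followed by $(u, v)$: each vertex of $A - \{a,b\}$ footprints itself, $u$ footprints $a$ (which is a private neighbor of $u$ relative to everything listed so far, since the only $A$-neighbors of $a$... hmm, need $a \notin N[\text{stuff already placed}]$). The delicate point is verifying that $a$ and $b$ are genuinely footprinted as new vertices; since $a \in A$ and $A$ is independent, $a$ is not dominated by any other vertex of $A$, and $a \not\sim v$, so after placing $A - \{a,b\}$ and then $u$, the vertex $a$ is newly footprinted by $u$; similarly $v$ newly footprints $b$ because $b \not\sim u$ and $b$ is not adjacent to any vertex of $A - \{a,b\}$. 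This yields a legal sequence of length $|A| + 1 = \ggr(G) + 1$, a contradiction. Hence $V - A$ is independent, so $G$ is bipartite with parts $A$ and $V - A$.

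The second step handles the dichotomy. We now know $G$ is bipartite with bipartition $(A, B)$ where $B = V - A$ and $|A| = \alpha(G) \geq |B|$. If $|A| = |B|$ then $\alpha(G) = n(G)/2$ and we are in the first case. So assume $|A| > |B|$; I must show $A$ is the \emph{unique} $\alpha(G)$-set. The key observation is that in a connected bipartite graph, every maximum independent set other than the two sides of a "balanced component" relates to König's theorem: $\alpha(G) = n(G) - |M|$ for a maximum matching $M$. Since $|A| > |B| = n(G) - |A|$, the set $B$ is not independent-maximum-sized in a way that... Actually the cleaner route: suppose $A'$ is another $\alpha(G)$-set. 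Run the same argument with $A'$ in place of $A$: $A'$ is also maximal independent dominating, and $V - A'$ is independent (by the first step applied to $A'$), so $(A', V - A')$ is also a bipartition of the connected graph $G$. But a connected graph has a unique bipartition, so $\{A', V-A'\} = \{A, B\}$, forcing $A' = A$ (since $|A'| = |A| > |B|$). Thus $A$ is the unique $\alpha(G)$-set. The final sentence "$V - A$ is independent for any $\alpha(G)$-set $A$" is then immediate: in the first case both $A$ and $V-A$ are the two sides of the bipartition; in the second case $A$ is unique and $V - A = B$ is a side of the bipartition.

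The main obstacle is the careful bookkeeping in the first step: one must check precisely which vertices are footprinted and confirm the exhibited sequence of length $|A|+1$ is legal, which hinges on triangle-freeness ruling out $a \sim v$, $b \sim u$, and $a = b$, combined with the independence of $A$ to guarantee $a, b$ are "fresh" when $u, v$ are appended. I would also double-check the edge case where $u$ has $a$ as its \emph{only} private-style new footprint — but since $|N(u) \cap A| \geq 2$ by Corollary~\ref{cor:atleast2}, there is room to choose $a \neq b$ even if $N(u) \cap A$ and $N(v) \cap A$ overlap, so the construction always goes through. Once the sequence-extension argument is nailed down, the bipartition-uniqueness step is routine.
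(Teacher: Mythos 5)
There is a genuine gap in your first step, and it is fatal as written: the sequence you exhibit has length $|A|$, not $|A|+1$. Listing $A-\{a,b\}$ followed by $(u,v)$ gives $(|A|-2)+2=|A|$ terms, so even though the sequence is legal (your footprinting checks for $a$ and $b$ are all correct), it produces no contradiction with $\ggr(G)=|A|$. The problem is structural, not just arithmetic: by making $u$ footprint $a$ and $v$ footprint $b$, you sacrifice $a$ and $b$ from the sequence, so the two new vertices $u,v$ exactly replace the two vertices of $A$ you removed. The paper's construction avoids this loss by having $u$ footprint the \emph{other non-$A$ vertex} $v$ instead of a vertex of $A$: list $N(u)\cap A$ first, then $u$, then the remaining vertices of $A$. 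Triangle-freeness gives $N(u)\cap N(v)\cap A=\emptyset$, so $v$ is not dominated by $N(u)\cap A$ and $u$ footprints $v$; and since $A$ is independent and the vertices of $A-N(u)$ are not adjacent to $u$, every vertex of $A$ still footprints itself. That sequence has length $|A|+1$, which is the needed contradiction. (You had all the ingredients — $a\neq b$, $a\not\sim v$, $b\not\sim u$ — but deployed them on the wrong target.)

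Your second step, by contrast, is correct and is actually a cleaner route than the paper's. Once one knows that $V-A'$ is independent for \emph{every} $\alpha(G)$-set $A'$, each such $A'$ determines a bipartition of the connected graph $G$ into two independent sets; since a connected bipartite graph (of order at least $3$, hence with an edge) has a unique such bipartition, $\{A',V-A'\}=\{A,V-A\}$, and $|A'|=|A|>|V-A|$ forces $A'=A$. The paper instead argues directly with the sets $A\cap B$, $V-(A\cup B)$, $A-B$, $B-A$ and derives a contradiction with connectivity; your version buys brevity by quoting the uniqueness of the bipartition. So: repair the length count in step 1 along the lines above and the rest of your proof stands.
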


\begin{proof}
Let $A=\{x_1,\ldots,x_n\}$ be an $\alpha(G)$-set.  We first show that if $x,y\in V-A$ such that $N(x) \cap N(y) \cap A = \emptyset$, then $xy \not\in E$. Suppose to the contrary that  $\{x,y\} \subseteq V-A$ such that $xy\in E$ and $N(x) \cap N(y) \cap A=\emptyset$.  Assume without loss of generality that $N(x)\cap A=\{x_1,\ldots,x_r\}$ and $N(y)\cap A =\{x_{r+1},\ldots, x_t\}$.  The sequence $S=(x_1,\ldots, x_n)$ is a Grundy dominating sequence.  But now $S'=(x_1,\ldots,x_r,x,x_{r+1},\ldots,x_n)$ is a legal sequence.  (In $S'$, $x$ footprints $y$ and $x_i$ footprints $x_i$ for every $i \in [n]$.)  This is a contradiction and therefore no such pair $x,y \in V-A$ exists.

Next, we show that $V-A$ is indeed independent. Let $\{x,y\}\subseteq V-A$.  If $N(x) \cap N(y) \cap A=\emptyset$, then $xy \notin E$ from the above argument.  On the other hand, if $N(x) \cap N(y) \cap A \neq \emptyset$, then $xy \notin E$ since $G$ is triangle-free.
In all of these cases $V-A$ is an independent set. It follows that $G$ is bipartite. Thus, $\alpha(G)\ge n(G)/2$. 

Now, if $\alpha(G)=n(G)/2$, we are done, so let us assume that $\alpha(G) >n(G)/2$. We claim that there is only one $\alpha(G)$-set, and suppose to the contrary that $A$ and $B$ are distinct $\alpha(G)$-sets.  Let $C=A \cap B$ and let $D=V-(A \cup B)$.  In addition, let $A_1=A-C$ and $B_1=B-C$.   Since $A\neq B$ and
$|A|=|B|=\alpha(G) > n(G)/2$, we infer that $C\neq \emptyset$ and that neither of $A_1$ nor $B_1$  is empty.  From the above argument, both of $A_1\cup D$
and $B_1 \cup D$ are independent.  Let $u$ be any vertex in $C$ and let $v$ be any vertex in $A_1$.  Since $N(C) \subseteq D$ and $N(D) \subseteq C$ there is no
$u,v$-path in $G$, which contradicts the fact that $G$ is connected.  Therefore, if  $\alpha(G) >n(G)/2$, then $G$ has a unique $\alpha(G)$-set.
\end{proof}

Based on the above result, we spend the remainder of this section focusing on bipartite graphs. We next give two  properties that help us determine when a bipartite graph is in $\cFa$.


\medskip
\noindent {\bf Property H:}  If $A$ is any $\alpha(G)$-set, then for every $W\subseteq V-A$ with $|W|<|A|$, we have $|N(W)|\ge |W|+1$.

\smallskip

\begin{prop} \label{prop:PropertyH}
If $G$ is a bipartite graph of order at least $3$ and $G \in \cFa$, then $G$ satisfies Property H.
\end{prop}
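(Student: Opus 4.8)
The plan is to argue by contradiction: suppose $G$ is a bipartite graph in $\cFa$ of order at least $3$, let $A$ be an $\alpha(G)$-set, and suppose there is a set $W \subseteq V-A$ with $|W| < |A|$ and $|N(W)| \le |W|$. Since $G$ is bipartite and in $\cFa$, Proposition~\ref{prop:bipartite} tells us that $V-A$ is independent, so $A$ and $V-A$ are the two sides of a bipartition, and every neighbor of a vertex of $W$ lies in $A$. The idea is to use such a ``small expansion'' set $W$ to build a legal closed neighborhood sequence longer than $|A| = \alpha(G) = \ggr(G)$, which is the contradiction.

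Here is the construction I would carry out. Write $A = \{x_1, \ldots, x_n\}$ and, after relabeling, assume $N(W) \subseteq \{x_1, \ldots, x_m\}$ where $m = |N(W)| \le |W|$. Start the sequence with the vertices of $W$ in any order: each $w \in W$ footprints itself, since the $w$'s are pairwise nonadjacent (they lie in the independent side $V-A$) and so far nothing in $N[w]$ other than $w$ and vertices of $N(W)\subseteq A$ has been covered — in fact $w$ is its own private neighbor because no earlier $w'$ is adjacent to $w$ and no $x_i$ has been played yet. So $(w_1, \ldots, w_{|W|})$ is legal of length $|W|$, and the set of vertices it has footprinted/covered is contained in $W \cup N(W) \subseteq W \cup \{x_1,\ldots,x_m\}$. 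Now append $x_1, x_2, \ldots, x_n$ in order. For $i \le m$: I need $x_i$ to footprint something; since $A$ is independent, $x_i$ is a private neighbor of itself with respect to $A$, and $x_i$ has not yet been covered by the $W$-part (vertices of $A$ are covered only if adjacent to some $w$, but... this is exactly the subtle point — see below), so $x_i$ footprints $x_i$. For $i > m$: $x_i$ is not adjacent to any vertex of $W$ (as $N(W) \subseteq \{x_1,\ldots,x_m\}$), so $x_i$ has certainly not been covered, and again $x_i$ footprints itself. The resulting sequence has length $|W| + n > n = \ggr(G)$, a contradiction.

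The main obstacle — and the step needing real care — is the claim that when we append the vertices $x_1, \ldots, x_m \in N(W)$, each $x_i$ still footprints a new vertex, even though $x_i$ may already have been covered (it is adjacent to some $w \in W$, hence $x_i \in N[w]$ already). So $x_i$ does not footprint itself. The fix is a counting/pigeonhole argument: the vertices played in the $W$-block footprint exactly the $|W|$ vertices of $W$ (each $w$ footprints itself). The vertices of $A$ that are ``used up'' (i.e., covered and with no remaining uncovered vertex in their closed neighborhood) — I should instead directly count private neighbors. Concretely: among $x_1, \ldots, x_m$, I want to show at least one of them (when played after $W$) still has a private neighbor, and more carefully, handle them one at a time. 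Since $|N(W)| \le |W|$, I would restructure: play the $W$-vertices and the $N(W)$-vertices in an interleaved or greedy fashion so that I extract $|W|$ footprints from $W$ and $n - m + (\text{something})$ footprints from $A$; the deficiency $|N(W)| \le |W|$ is precisely what guarantees the total exceeds $n$. The cleanest route is probably: let $S_0 = (w_1, \ldots, w_{|W|})$ footprinting $W$; then greedily extend by all of $A$ — every $x_i$ with $i > m$ contributes a footprint (itself), giving $n - m$ more; and then observe that the prefix $S_0$ followed by $\{x_i : i>m\}$ has covered $W \cup \{x_i : i>m\}$ but \emph{not} all of $V$ (it misses, e.g., vertices in $\{x_1,\ldots,x_m\}$ that have no neighbor among the played $x_i$'s — which holds since $A$ is independent, so no $x_j$ with $j>m$ is adjacent to $x_i$, meaning $x_i$ is uncovered unless $x_i \in N[w]$ for some $w$; but at least it shows the sequence is extendable). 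Thus this legal sequence of length $|W| + (n-m) \ge |W| + n - |W| = n$ can be \emph{further} extended because $\{x_1,\ldots,x_m\}$ is not yet fully covered/footprinted — giving length $> n$. I would need to verify this last extendability claim carefully, using $|W| \ge m$ together with the fact that the $W$-block footprinted only $W$ itself (so no $x_i$, $i \le m$, has yet been footprinted, hence each such $x_i$ still lacks a footprinter and the sequence is not yet a Grundy sequence), which yields the strict inequality $|W| + n - m + 1 > n$ after appending one more vertex from $\{x_1,\ldots,x_m\}$ that footprints an as-yet-unfootprinted $x_i$.
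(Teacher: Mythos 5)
Your overall strategy---start a legal sequence with the vertices of $W$ (each footprinting itself, since $V-A$ is independent by Proposition~\ref{prop:bipartite}) and use $|N(W)|\le |W|<|A|$ to force a legal sequence of length greater than $|A|=\ggr(G)$---is the same as the paper's, and the first block of your construction is fine. But the step you yourself flag as needing care is where the argument breaks, and your proposed fix does not work. After the $W$-block has been played, \emph{every} vertex of $N(W)=\{x_1,\dots,x_m\}$ is already dominated and, by the definition of footprinting, already footprinted (each $w$ footprints all of $N[w]$ not covered earlier, not just itself). So your claims that ``the $W$-block footprinted only $W$ itself'' and that some $x_i$ with $i\le m$ ``has no neighbor among the played vertices'' are false, and there is no ``as-yet-unfootprinted $x_i$'' left to justify appending one more vertex at the end. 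Concretely, after playing $W$ followed by all of $A-N(W)$, the covered set is $W\cup A\cup N(A-N(W))$; nothing in your argument shows this is a proper subset of $V$, and if it is all of $V$ (which can happen when $|N(W)|=|W|$), your sequence has length exactly $|A|$ and yields no contradiction.

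The missing ingredients are: (a) use connectivity (together with $|W|+|N(W)|\le 2|W|<|W|+|A|\le n(G)$) to produce a vertex $x\in N(W)$ having a neighbor $y\notin W$; since $N(W)\subseteq A$ and $G$ is bipartite, $y\in V-A$ and $y\notin N[W]$, so $y$ is still uncovered after the $W$-block; and (b) play $x$ \emph{immediately after} $W$ and \emph{before} the vertices of $A-N(W)$, because those vertices could otherwise dominate $y$ first. The sequence $(w_1,\dots,w_k,x,x_1,\dots,x_\ell)$ with $\{x_1,\dots,x_\ell\}=A-N(W)$ is then legal ($x$ footprints $y$, each $x_i$ footprints itself since $x_i\notin N(W)$ and $A$ is independent) and has length $|W|+1+|A|-|N(W)|\ge |A|+1$, the desired contradiction. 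As written, your proposal is missing both the existence of $(x,y)$ and the correct placement of $x$, so it has a genuine gap.
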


\begin{proof}
Assuming that the statement is false, let $A$ be an $\alpha(G)$-set, and
$W=\{w_1,\ldots,w_k\}$ be a subset of $V-A$ with $|W|<|A|$ and $|N(W)|\le |W|$. Since $G$ is connected, there exists $x\in N(W)$ having a neighbor $y\notin W$. Letting $A-N(W)=\{x_1,\ldots,x_\ell\}$, note that $S=(w_1,\ldots,w_k,x,x_1,\ldots, x_\ell)$ is a legal sequence, since $x$ footprints $y$ while every other vertex in $S$ footprints itself (note that $W$ is independent by Proposition~\ref{prop:bipartite}). Since $|\widehat{S}|\ge|A|+1$, this contradicts the assumption that  $G\in\cFa$.
\end{proof}

\noindent {\bf Property T:} If $A$ is any $\alpha(G)$-set and $w \in V - A$, then for each $u \in N(w) \cap A$, we have $N(u) \subseteq N((N(w)\cap A) - \{u\})$.

\smallskip

\vskip5mm
\begin{prop} \label{prop:PropertyT}
If $G$ is a triangle-free graph of order at least $3$ and $G \in \cFa$, then $G$ satisfies Property T.
\end{prop}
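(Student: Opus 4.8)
The plan is to argue by contradiction, mirroring the structure of the proofs of Propositions~\ref{prop:PropertyH} and~\ref{prop:bipartite}. Suppose $G$ is triangle-free, $G\in\cFa$, but Property~T fails. Then there is an $\alpha(G)$-set $A$, a vertex $w\in V-A$, a vertex $u\in N(w)\cap A$, and a vertex $z\in N(u)$ with $z\notin N\bigl((N(w)\cap A)-\{u\}\bigr)$. By Proposition~\ref{prop:bipartite} we know $G$ is bipartite and $V-A$ is independent, so $w$'s neighbors all lie in $A$; write $N(w)\cap A=\{u=u_1,u_2,\ldots,u_r\}$. The choice of $z$ says $z$ is adjacent to $u$ but to none of $u_2,\ldots,u_r$. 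Note $z\neq w$ (since $w\notin N(u)$ would only fail if $wu\in E$, but $w\in V-A$ and $u\in A$ are adjacent — actually $z$ could be $w$; I would handle this by observing that if $z=w$ there is nothing to prove since then $w\in N(u)\cap A$ is impossible as $w\notin A$, so in fact $z=w$ is allowed and $w$ itself witnesses the failure when $w\notin N((N(w)\cap A)-\{u\})$, i.e.\ $w$ has no neighbor among $u_2,\dots,u_r$; this case is covered by the same construction below, so I would just allow $z=w$).

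The key step is to build a legal sequence of length $|A|+1$. Starting the sequence with $u_2,u_3,\ldots,u_r$ in any order, each $u_i$ (for $i\ge 2$) footprints itself; then append $w$, which footprints $u$ (since $u\in N[w]$ and $u$ is adjacent to none of $u_2,\ldots,u_r$ and $u\notin\{u_2,\ldots,u_r\}$, as $G$ is triangle-free so $A$ is independent and $u\neq u_i$). Next append $u$ itself: $u$ footprints $z$, because $z\in N[u]$ while $z$ is not in the closed neighborhood of $w$ (as $z\neq w$ and $z\notin A$ forces $z$ adjacent to no $u_i$, wait — I need $z\notin N[w]$; if $z\in V-A$ then $z$ is adjacent only to vertices of $A$, and $zw\in E$ would put $z\in N(w)$, but then $z$ being a common neighbor... this needs the triangle-free/bipartite structure) nor of any $u_i$ with $i\ge 2$ by the defining property of $z$. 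Then append the remaining vertices of $A$, namely $A-\{u,u_2,\ldots,u_r\}=A-(N(w)\cap A)$, each footprinting itself since $A$ is independent. Counting: we used $r-1$ vertices $u_i$, then $w$, then $u$, then $|A|-r$ vertices, for a total of $(r-1)+1+1+(|A|-r)=|A|+1$ distinct vertices in a legal sequence, contradicting $\ggr(G)=\alpha(G)=|A|$.

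The main obstacle is verifying the footprinting claims carefully, in particular that $w$ footprints $u$ and that $u$ footprints $z$, i.e.\ that $z$ is not already covered by $\bigcup N[u_i]$ over $i\ge 2$ together with $N[w]$. The claim $z\notin N[u_i]$ for $i\ge 2$ is exactly the hypothesis on $z$; the claim $z\notin N[w]$ requires noting that if $z\in N(w)$ then, since $z\in N(u)$ and $u\in N(w)$, the vertices $z,u,w$ would... are not a triangle unless $zw\in E$, $zu\in E$, $uw\in E$ — so I must rule out $zw\in E$ separately. This follows because $w\in V-A$ and, by Proposition~\ref{prop:bipartite}, $V-A$ is independent, so $z\in V-A$ would give $zw\notin E$; and if $z\in A$ then $z=u$ is impossible (a vertex is not its own neighbor in the relevant sense, or rather $z\in N(u)$ with $z\in A$, $u\in A$, $A$ independent forces $z\neq u$ — contradiction with $z$ adjacent to $u$), so actually $z\in A$ cannot happen at all. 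Hence $z\in V-A$, $zw\notin E$, and the argument goes through. I would write this out cleanly, taking care to note that $w$ does have the private neighbor $u$ at the moment it is added (which needs $u\notin\bigcup_{i\ge2}N[u_i]$, true since $A$ is independent and triangle-free), and that the degenerate small cases ($|A|\le 2$, or $r=1$) are either vacuous or handled identically.
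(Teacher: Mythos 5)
Your proof is correct and is essentially the paper's own argument: both derive a contradiction by exhibiting the legal sequence $(u_2,\ldots,u_r,\,w,\,u,\,\text{rest of }A)$ of length $|A|+1$, with each $u_i$ footprinting itself, $w$ footprinting $u$, and $u$ footprinting $z$. The one loose end in your write-up, the possibility $z=w$, is most cleanly excluded by Corollary~\ref{cor:atleast2} (which gives $r\ge 2$, so $z=w$ would contradict $z\notin N\bigl((N(w)\cap A)-\{u\}\bigr)$ since $w$ is adjacent to every $u_i$), rather than by your suggestion that the same construction covers that case.
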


\begin{proof}
Let $A = \{x_1, \dots ,x_n\}$ be an $\alpha(G)$-set and let $w \in V - A$. By Proposition~\ref{prop:bipartite}, $G$ is bipartite and $V-A$ is independent.
Reindexing if necessary, we may assume $N(w)=N(w)\cap A = \{x_1, \dots, x_k\}$. Suppose for some $i \in [k]$ that
\[N(x_i) \not\subseteq N(N(w) - \{x_i\}).\]
That is, there exists $y \in N(x_i)$ such that $\displaystyle y \not\in \bigcup_{j\ne i, j \in [k]} N(x_j)$. Note that
\[S = (x_1, \dots, x_{i-1},x_{i+1}, \dots, x_k, w, x_i, x_{k+1}, \dots, x_n)\]
is a legal sequence since $x_j$ footprints itself for $j \in [n]-\{i\}$, $w$ footprints $x_i$, and $x_i$ footprints $y$. However, this is a contradiction.
Therefore no such $x_i$ exists, and $G$ satisfies Property T.
\end{proof}

\begin{lem}\label{lem:PropP} If $G$ is a bipartite graph of order at least $3$ in $\cFa$, and $A$ is any $\alpha(G)$-set, then $|N(x) \cap N(y) \cap A| \ne 1$ for every pair $x,y \in V-A$.
\end{lem}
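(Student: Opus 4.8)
The plan is to argue by contradiction in the same spirit as the proofs of Propositions~\ref{prop:PropertyH} and~\ref{prop:PropertyT}: assume there is an $\alpha(G)$-set $A$ and vertices $x,y\in V-A$ with $|N(x)\cap N(y)\cap A|=1$, say $N(x)\cap N(y)\cap A=\{u\}$, and then construct a legal closed neighborhood sequence of length exceeding $\alpha(G)=\ggr(G)$. First I would record the structural facts already available: since $G$ is bipartite of order at least $3$ in $\cFa$, Proposition~\ref{prop:bipartite} gives that $V-A$ is independent, so $xy\notin E$; Corollary~\ref{cor:atleast2} gives $|N(x)\cap A|\ge 2$ and $|N(y)\cap A|\ge 2$, so each of $x$ and $y$ has a neighbor in $A$ other than $u$; and Proposition~\ref{prop:PropertyH} (Property H) will be useful if the construction needs the remaining part of $A$ to still "cover enough."

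The key steps, in order, would be: (1) write $N(x)\cap A=\{u,a_1,\ldots,a_r\}$ and $N(y)\cap A=\{u,b_1,\ldots,b_s\}$ with $r,s\ge 1$, and note that by the defining hypothesis the $a_i$ are distinct from the $b_j$ (they cannot coincide since the common neighborhood in $A$ is exactly $\{u\}$); (2) build a sequence that starts by listing all vertices of $A-\{u\}$ in an order where the $a_i$'s and $b_j$'s come first, then insert the two extra vertices $x$ and $y$ cleverly and end with $u$. The natural attempt is a sequence of the form $(\,\text{all of }A\setminus\{u\}\,,\ x,\ y,\ u\,)$ or some interleaving thereof. Each vertex of $A\setminus\{u\}$ footprints itself (they are independent and pairwise non-adjacent in the bipartite graph, being all on the same side as $A$). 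The subtlety is making $x$, $y$, and $u$ all footprint something new: $x$ must footprint a vertex not yet dominated — the candidate is $u$ itself, since $u\notin N[A\setminus\{u\}]$ (as $A$ is independent) and $u\in N(x)$; then $y$ must footprint a vertex still not dominated after $x$ is placed, and finally $u$ must footprint a vertex not dominated by everything before it. This is exactly where the hypothesis $|N(x)\cap N(y)\cap A|=1$ is forced into play: because $x$ and $y$ share only $u$ among $A$, after placing $x$ (which footprints $u$) the vertex $y$ can still footprint $u$? No — $u$ is already dominated. So the correct construction must have $x$ footprint $u$ and $y$ footprint a private neighbor of its own, or symmetrically. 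I would instead use: after $A\setminus\{u\}$, place $x$ (footprinting $u$), then place $y$ — but $y$ must footprint something new, and since $N(y)\cap A\setminus\{u\}\subseteq A\setminus\{u\}$ is already dominated, and $u$ is dominated by $x$, $y$ has no new footprint unless $y$ has a neighbor outside $N[A\setminus\{u\}]\cup N[x]$. Hence the genuinely right sequence is probably $(\,A\setminus\{u\},\ x,\ u\,)$ extended, giving length $|A|+1$: here $x$ footprints $u$, and $u$ footprints $y$ (since $y\in N(u)$, $y\notin A$, and $y\notin N[A\setminus\{u\}]\cup N[x]$ precisely because $y$'s only $A$-neighbor shared with... wait, $y$ could be adjacent to some $a_i$). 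So I must choose things so that $y\notin N[x]$ — true, $xy\notin E$ and they share only $u$ — and $y\notin N(A\setminus\{u\})$; if $y$ \emph{is} adjacent to some vertex of $A\setminus\{u\}$ then this fails and I would instead swap the roles or use Property~H to relocate. The cleanest resolution: pick the neighbor of $u$ carefully, or run the Property~H machinery on $W=\{x\}$ or $W=\{x,y\}$ to guarantee enough uncovered vertices.

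The main obstacle I anticipate is precisely this bookkeeping: after the forced footprints of $x$ and $u$, ensuring there remains an "unseen" vertex for the last inserted element, which requires controlling whether $x$ or $y$ is adjacent to vertices of $A\setminus\{u\}$ that would prematurely dominate the intended private neighbor. I expect the resolution to combine two sub-cases — one where $y$ (or $x$) has a neighbor in $A\setminus\{u\}$, handled by inserting that vertex late and using it or its private neighbor as a footprint; and one where it does not, handled directly. Property~H (Proposition~\ref{prop:PropertyH}) and Property~T (Proposition~\ref{prop:PropertyT}), both already established for this class, should supply exactly the counting needed to always find the extra footprint, yielding a legal sequence of length $\alpha(G)+1>\ggr(G)$ and the desired contradiction.
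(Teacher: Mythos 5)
Your overall strategy (build a legal sequence of length $\alpha(G)+1$) can be made to work, but as written the proposal never settles on a sequence that is actually legal, and the hedging at the end does not close the gap. The concrete problem: your primary candidate $(A\setminus\{u\},\,x,\,u)$ requires $u$ to footprint $y$, which needs $y\notin N[A\setminus\{u\}]$; but Corollary~\ref{cor:atleast2} (which you yourself invoke) guarantees $|N(y)\cap A|\ge 2$, so $y$ always has an $A$-neighbor other than $u$ and is therefore already dominated before $u$ is played. Thus the ``clean'' case of your final paragraph is vacuous, and every instance falls into the case you leave unresolved. The resolution is not ``Property H machinery'' (Property H plays no role here) but a reordering: list only $N(x)\cap A-\{u\}=\{a_1,\ldots,a_r\}$ first, then $x$ (which footprints $u$, since $A$ is independent), then $u$ --- which footprints $y$ precisely because the hypothesis $N(x)\cap N(y)\cap A=\{u\}$ says $y$ is adjacent to no $a_i$, and $xy\notin E$ since $V-A$ is independent --- and only then the remaining vertices of $A$, each of which still footprints itself (they lie in the independent set $A$ and are nonadjacent to $x$). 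This gives length $|A|+1$. You gesture at ``inserting that vertex late'' but never commit to this ordering, and without it the argument does not go through.

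You also missed that the lemma is an immediate consequence of Property T, already established in Proposition~\ref{prop:PropertyT}, which you cite only as a possible auxiliary counting tool. The paper's proof is two lines: if $N(x)\cap N(y)\cap A=\{z\}$, apply Property T with $w=x$ and distinguished vertex $z$ to get $N(z)\subseteq N\bigl((N(x)\cap A)-\{z\}\bigr)$; but $y\in N(z)$ while $y\notin N\bigl((N(x)\cap A)-\{z\}\bigr)$, since $z$ is the only $A$-neighbor that $y$ shares with $x$ --- a contradiction. Your direct construction, once repaired as above, essentially re-proves the special case of Property T that is needed (the required sequence is exactly the one used in the proof of Proposition~\ref{prop:PropertyT}), so recognizing that reduction would have saved the work.
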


\begin{proof}
By Proposition~\ref{prop:PropertyT}, $G$ satisfies Property T. By Corollary~\ref{cor:atleast2}, $|N(u)\cap A| \ge 2$ for every $u \in V-A$. Let $\{x, y\} \subseteq V-A$ and suppose to the contrary that $N(x) \cap N(y) \cap A = \{z\}$. It follows that $y \in N(z)$ and $y \not\in N((N(x) \cap A) - \{z\})$, which violates Property T.
\end{proof}

By Corollary~\ref{cor:atleast2} and Lemma~\ref{lem:PropP}, we can classify all bipartite graphs in $\cF$ of girth $6$ or more.

\begin{thm} \label{thm:bipartitegirth6}
A bipartite graph $G$ with girth at least $6$ is in $\cF$ if and only if $G=K_1$ or $G$ is a star $K_{1,r}$, where $r\ge 1$.
\end{thm}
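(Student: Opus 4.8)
The plan is to prove both directions, with the forward direction being the substantive one. First I would dispose of the easy direction: if $G=K_1$ or $G=K_{1,r}$, then one checks directly that $\Gamma(G)=\alpha(G)=\ggr(G)$ — indeed any star has $\alpha(K_{1,r})=\max\{r,1\}$, and a Grundy dominating sequence can have length at most this (the only way to footprint a new vertex after the first move in a star is limited), so $K_{1,r}\in\cFa\subseteq\cF$; the same trivially holds for $K_1$. So these graphs lie in $\cF$.

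For the forward direction, suppose $G$ is a bipartite graph of girth at least $6$ with $G\in\cF$. If $G$ has order at most $2$, then $G\in\{K_1,K_2\}$ and we are done, so assume $n(G)\ge 3$. By Theorem~\ref{thm:topthreebipartite}, $\alpha(G)=\Gamma(G)$, hence $\alpha(G)=\ggr(G)$, i.e.\ $G\in\cFa$. Now I can invoke the machinery developed for bipartite graphs in $\cFa$: by Proposition~\ref{prop:bipartite}, for any $\alpha(G)$-set $A$ the set $V-A$ is independent, so $(A,V-A)$ is the (essentially unique, when $\alpha(G)>n/2$) bipartition. By Corollary~\ref{cor:atleast2}, every $w\in V-A$ has $|N(w)\cap A|\ge 2$, and by Lemma~\ref{lem:PropP}, no two vertices $x,y\in V-A$ satisfy $|N(x)\cap N(y)\cap A|=1$.

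The key step is then to combine these two constraints with the girth hypothesis to force $V-A=\emptyset$. Suppose $w\in V-A$. Since $|N(w)\cap A|\ge 2$, pick distinct $u_1,u_2\in N(w)\cap A$. Because girth $\ge 6$, the edge $wu_1$ lies in no $4$-cycle and no triangle, so for any other vertex $w'\in V-A$ adjacent to $u_1$, $w'$ cannot also be adjacent to $u_2$ (that would create a $4$-cycle $w u_1 w' u_2$); more generally the girth condition severely restricts common neighborhoods. The plan is to show that if $V-A\ne\emptyset$, then some pair $x,y\in V-A$ must have exactly one common neighbor in $A$, contradicting Lemma~\ref{lem:PropP} — roughly, $G$ must contain at least one vertex $w\in V-A$ with a neighbor $u\in N(w)\cap A$ that has a second neighbor $w'$ in $V-A$ (such $w'$ exists since otherwise $u$ is a leaf and... one has to track whether $u$ might have other neighbors in $V-A$), and then $|N(w)\cap N(w')\cap A|$ is forced to be $1$ by girth, since a second common neighbor would close a $4$-cycle. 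One must be a little careful: it is conceivable that $u$ has $w$ as its only neighbor outside... but $u\in A$ and $u$ can only be adjacent to vertices of $V-A$; if $u$'s only neighbor is $w$, then $u$ would be better swapped — actually the cleanest route is: since $G$ is connected with $n\ge 3$, $A$ has a vertex $u$ of degree $\ge 2$ (else $G$ is a disjoint union of edges or a single edge), and if $u\in A$ has degree $\ge 2$ its neighbors $w_1,w_2\in V-A$ satisfy $N(w_1)\cap N(w_2)\cap A\supseteq\{u\}$, and girth $\ge 6$ forces this intersection to be exactly $\{u\}$, contradicting Lemma~\ref{lem:PropP}; hence every vertex of $A$ has degree $1$, which together with connectedness makes $G$ a star.

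The main obstacle I anticipate is the bookkeeping in that last step — specifically making sure the dichotomy "either $G$ is a star or we can exhibit the forbidden pair" is airtight, handling the boundary cases $n(G)\le 2$ and the case where $A$ might not be unique, and correctly deducing that a vertex of $A$ of degree $\ge 2$ genuinely produces two distinct vertices $w_1,w_2\in V-A$ with intersection exactly $\{u\}$ (one must rule out that $w_1,w_2$ share another common neighbor $u'\in A$, which is exactly what girth $\ge 6$ forbids since $w_1 u w_2 u' w_1$ would be a $4$-cycle). Once that is pinned down, the conclusion that $G$ is a star (or $K_1$) is immediate: every vertex in one part has degree $1$, so the other part is a single vertex.
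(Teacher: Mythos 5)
Your proof is correct and follows essentially the same route as the paper: reduce to $\cFa$ via Theorem~\ref{thm:topthreebipartite}, use Proposition~\ref{prop:bipartite} to make $V-A$ independent, and combine Lemma~\ref{lem:PropP} with the absence of $4$-cycles to rule out $|V-A|\ge 2$ (the paper finds two vertices of $V-A$ sharing a neighbor, which must then share at least two and close a $C_4$; you equivalently find a degree-$\ge 2$ vertex of $A$). The only blemish is the parenthetical ``else $G$ is a disjoint union of edges'' --- if every vertex of $A$ has degree at most $1$, $G$ could still be a star centered in $V-A$ --- but your argument does not actually use that claim, only the correct dichotomy it motivates.
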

\begin{proof}
It is easy to see that $K_1$ and all stars belong to $\cFa$.  For the converse let $G$ be a bipartite graph in the class $\cF$ having order
at least $3$ and girth at least $6$.  Since $G$ is bipartite, it follows from Theorem~\ref{thm:topthreebipartite} that $\alpha(G)=\Gamma(G)$.  Thus,
$G \in \cF$ if and only if $G \in \cFa$.
Let $A$ be any $\alpha(G)$-set.   Suppose that $|V-A| \ge 2$.  By Proposition~\ref{prop:bipartite}, $V-A$ is independent.
Since $G$ is connected, there exists a pair of vertices $x$ and $y$ that belong to $V-A$ such that $N(x) \cap N(y) \cap A=N(x) \cap N(y) \neq \emptyset$.  By
Lemma~\ref{lem:PropP}, we infer that  $|N(x) \cap N(y) \cap A| \ge 2$, which implies that $G$ contains a $4$-cycle.  This is a contradiction, and thus
$|A|=n-1$, which means that $G$ is a star.
\end{proof}

\begin{prop}
 Let $G$ be a connected bipartite graph of order $5$ or more, and let $A$ be an $\alpha(G)$-set. If there exist distinct vertices $x$ and $y$ in $V - A$
 and distinct vertices $u$ and $v$ in $A$ such that $N(x) \cap A= \{u, v\} = N(y) \cap A$, then $G \not\in \cFa$.
\end{prop}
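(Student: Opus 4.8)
The plan is to argue by contradiction: assume $G\in\cFa$ and contradict Property~H. So suppose $G\in\cFa$, and let $x,y\in V-A$ and $u,v\in A$ be as in the statement. Since $G$ is bipartite of order at least $3$ and $G\in\cFa$, Proposition~\ref{prop:bipartite} tells us that $V-A$ is independent. Hence $x$ and $y$ have no neighbors inside $V-A$, so $N(x)=N(x)\cap A=\{u,v\}$ and likewise $N(y)=\{u,v\}$. Setting $W=\{x,y\}\subseteq V-A$, we therefore have $N(W)=\{u,v\}$, a set of cardinality $2$.

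Next I would check that $W$ is small enough relative to $A$ for Property~H to be applicable, i.e.\ that $|A|\ge 3$. This is immediate: $G$ is bipartite on $n(G)\ge 5$ vertices, so the larger side of any bipartition of $G$ is an independent set of size at least $\lceil n(G)/2\rceil\ge 3$, whence $|A|=\alpha(G)\ge 3>2=|W|$. Now, by Proposition~\ref{prop:PropertyH}, $G$ satisfies Property~H, so applying it to $W$ yields $|N(W)|\ge|W|+1=3$, contradicting $|N(W)|=2$. This contradiction shows $G\notin\cFa$.

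There is essentially no hard step here; the only point requiring a moment's care is the bound $|A|\ge 3$, which licenses the use of Property~H on the two-element set $\{x,y\}$. (Alternatively, one could observe that $(A\setminus\{u,v\})\cup\{x,y\}$ is a second $\alpha(G)$-set distinct from $A$ --- it is independent because $V-A$ is independent and neither $x$ nor $y$ has a neighbor in $A\setminus\{u,v\}$ --- so Proposition~\ref{prop:bipartite} would force $\alpha(G)=n(G)/2$, hence $n(G)$ even and $n(G)\ge 6$, giving $|A|\ge 3$ again.)
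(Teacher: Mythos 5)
Your proof is correct, and it takes a genuinely different (and much shorter) route than the paper's. The paper argues from scratch: starting from the legal sequence formed by $A$, it uses the order hypothesis to rule out $G=C_4$, and then splits into cases according to whether $N(x_2)\subseteq N(x_1)$, in each case exhibiting an explicit legal sequence of length $|A|+1$. You instead observe that the hypothesis is precisely a violation of Property H for the two-element set $W=\{x,y\}$: once Proposition~\ref{prop:bipartite} gives that $V-A$ is independent, $N(W)=\{u,v\}$ has cardinality $|W|$ rather than at least $|W|+1$, and Proposition~\ref{prop:PropertyH} applies since $|A|=\alpha(G)\ge\lceil n(G)/2\rceil\ge 3>|W|$. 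There is no circularity here --- Propositions~\ref{prop:bipartite} and~\ref{prop:PropertyH} precede this statement in the paper and do not depend on it --- and the one point that needs care, namely $|A|\ge 3$ so that Property H is applicable to a two-element $W$, is exactly the point you verify (your parenthetical second argument for this bound is also valid but unnecessary). What the paper's self-contained case analysis buys is independence from Property H; what your argument buys is brevity and the conceptual remark that this proposition is just the $|W|=2$ instance of Property H.
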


\begin{proof}
Suppose to the contrary that $G \in \cFa$. Note that $S = (x_1, x_2, \dots, x_k)$ is a legal sequence where $\widehat{S} = A$. Moreover, reindexing if necessary, we may assume $N(x) \cap A = \{x_1, x_2\} = N(y)\cap A$. Note that if $N(\{x_1,x_2\})=\{x,y\}$, then $G = C_4$. Therefore, we may assume there exists $z \in N(x_1) - \{x,y\}$.

Suppose first that $N(x_2) \subseteq N(x_1)$. If there exists $w \in N(x_1) - N(x_2)$, then $S' = (x_2, x, x_1, x_3,x_4,  \dots, x_k)$ is a legal sequence as $x_i$ footprints itself for $2 \le i \le k$, $x_1$ footprints $w$, and $x$ footprints $x_1$. Thus, this case cannot occur and we may assume $N(x_1) = N(x_2)$. However, now $S'' = (x, y, x_1, x_3, x_4, \dots, x_k)$ is a legal sequence since each vertex of $S''$ other than $x_1$ footprints itself and $x_1$ footprints $z$. Therefore, we may assume $N(x_2) \not\subseteq N(x_1)$. Let $t \in N(x_2) - N(x_1)$ and consider $T = (x_1, x, x_2, x_3, \dots, x_k)$. Note that $T$ is a legal sequence as each vertex of $T$ other than $x$ and $x_2$ footprint themselves, $x$ footprints $x_2$, and $x_2$ footprints $t$.

In each case, we have a contradiction. Thus, $G \not\in \mathcal{F}_{\alpha}$.
\end{proof}

We next present a family of bipartite graphs that shows Property H and T alone are not sufficient to guarantee that a bipartite  graph is in $\cFa$. For each positive
integer $n$ at least $3$, we construct a bipartite graph $G_n$.  The set of vertices of $G_n$ consists of two partite sets $A=\{a_{i,j}:\,i\in [n], j\in[3]\}\cup\{x,y\}$ and $B=\{u_i:\, i\in [n]\}\cup \{w_i:\, i\in [n]\}$. The vertices $x$ and $y$ are adjacent to every vertex of $B$, and for every $i\in [n]$, the vertex $a_{i,1}$ is adjacent to $u_i$ and $w_i$, while $a_{i,2}$ and $a_{i,3}$ are adjacent only to $u_i$. Now, $A$ is the unique $\alpha$-set of $G$ with $|A|=3n+2$, and
$G$ satisfies Properties H and T.  On the other hand, the sequence $S=(a_{1,3},a_{1,2},u_1,a_{1,1},a_{2,3},a_{2,2},u_2,a_{2,1},\ldots,a_{n,3},a_{n,2},u_n,a_{n,1})$ is a closed neighborhood sequence, since $a_{i,3}$ and $a_{i,2}$ footprint themselves, $u_i$ footprints $a_{i,1}$, and $a_{i,1}$ footprints $w_i$, for all $i\in [n]$. Since $|\widehat{S}|=4n$, this implies that $G\notin \cFa$.

We next consider the following additional property, which can be viewed as a generalization of Property T.

\vskip5mm
\noindent {\bf Property $\rm{T}^*$:} If $A$ is any $\alpha(G)$-set and $W \subseteq V - A$, then for every $U \subseteq N(W)\cap A$ with $|U|=|W|$ we have $N(U) \subseteq N((N(W)\cap A) - U)$.

\medskip

To see that Property $\rm{T}^*$ is a generalization of Property T, note that  the latter is obtained from Property $\rm{T}^*$ by letting $W=\{w\}$ and $U=\{u\}$.

\begin{prop} \label{prp:sufficient}
Let $G$ be a connected bipartite graph such that for any $\alpha(G)$-set $A$, the set $V-A$ is independent. If $G$ satisfies Properties H and $\rm{T}^*$, then $G\in\cFa$.
\end{prop}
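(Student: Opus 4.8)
The plan is to argue by contradiction: suppose $G \in \cFa$ fails, i.e.\ there is a legal closed neighborhood sequence $S$ with $|\widehat{S}| > \alpha(G)$. Fix an $\alpha(G)$-set $A$; by hypothesis $V-A$ is independent, and $A$ is a maximum independent set, so $A$ is the maximum side (or one of two sides) of the bipartition. I would take a \emph{longest} legal sequence $S = (s_1,\dots,s_m)$ with $m = \ggr(G) > |A|$, and try to massage it—using commutativity-type rearrangements where possible—into a form that contradicts Property $\rm T^*$ or Property H. The natural bookkeeping is: let $W = \widehat{S} \cap (V-A)$ and let $U$ be the set of those vertices of $\widehat{S} \cap A$ whose footprinter role can be ``blamed'' on the presence of $W$. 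Intuitively each vertex $w \in W$ that appears in $S$ must footprint something, and since $V-A$ is independent, $w$ footprints either itself or a vertex of $A$; tracking these footprinted $A$-vertices is the crux.

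The key steps, in order: (1) Show that $\widehat{S}$ can be assumed to contain $A - N(W)$ in full — any vertex of $A$ not dominated by $W$ footprints only itself and can be appended freely, and conversely if some vertex of $A-N(W)$ is missing from $\widehat S$ we get a longer legal sequence; hence WLOG $A - N(W) \subseteq \widehat S$. (2) Then $\widehat S = (A - N(W)) \cup U' \cup W$ where $U' \subseteq N(W) \cap A$, and counting gives $|U'| + |W| > |A \cap N(W)|$, i.e.\ $|U'| > |A \cap N(W)| - |W|$. (3) Here is where Property H enters: with $W$ in the role of the subset of $V-A$, if $|W| < |A|$ then $|N(W)| \ge |W|+1$; combined with the count from step (2) this forces $|U'| \ge |N(W) \cap A| - |W| + $ something, and in particular $U'$ is large enough that we can choose a subset $U \subseteq N(W) \cap A$ with $|U| = |W|$ and $U \cap U' \neq \emptyset$, or more precisely that the vertices of $U'$ cannot all be ``covered'' by $W$'s footprinting. (4) Analyze the footprinting structure: order $S$ and look at when the vertices of $W$ and of $U'$ appear; each vertex of $U'$, to be legal in $S$, must footprint some vertex — and since the vertices of $A - N(W)$ and those of $U'$ all lie in $A$ (independent), they can only footprint themselves or vertices of $V - A$. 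The vertices of $V-A$ available to be footprinted are exactly $N(\widehat S \cap A)$; I want to show that the demand exceeds the supply unless Property $\rm T^*$ is violated, namely unless for the chosen $U$ we have $N(U) \subseteq N((N(W)\cap A) - U)$ failing.

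The main obstacle I anticipate is step (4): converting the existence of a \emph{single} long legal sequence into a \emph{clean} violation of the set-valued Property $\rm T^*$. The difficulty is that in an arbitrary legal sequence the footprinting assignments are not canonical — a vertex may footprint several vertices, and the order matters — so one has to be careful to choose $W$ and $U$ so that the Hall-type obstruction ($N(U) \not\subseteq N((N(W)\cap A)-U)$) is genuinely witnessed. I expect the right move is to pick $S$ minimizing $|W|$ (or $|\widehat S \setminus A|$) among all counterexample sequences of length $\ggr(G)$, so that every $w \in W$ is ``essential,'' and then a Hall/defect-version argument on the bipartite graph between $U'$ (or $W$) and $V-A$ pins down a set $U$ of size $|W|$ whose neighborhood is not absorbed by $N((N(W)\cap A)-U)$, contradicting Property $\rm T^*$. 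The independence of $V-A$ and the maximality of $A$ (so that $|V - A| \le |A|$, giving $|W| < |A|$ whenever $W \neq V-A$, with the boundary case $|A| = n/2$ handled separately) should make the counting tight enough to close the argument.
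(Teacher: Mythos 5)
Your counting framework is the right one: with $W=\widehat{S}\cap(V-A)$ and $A-N(W)\subseteq\widehat{S}$, proving $|\widehat{S}|\le|A|$ amounts to showing that $\widehat{S}$ omits at least $|W|$ vertices of $N(W)\cap A$ (this is exactly the ``Property U'' reformulation used elsewhere in the paper). But the proposal stops exactly where the work starts: your step (4) is flagged as ``the main obstacle'' and is addressed only by ``I expect the right move is\dots'', namely minimizing $|W|$ over counterexample sequences and then running a Hall/deficiency argument on a static bipartite graph. That step is not carried out, and it is doubtful it can be made to work in that form, because the footprinting constraints are inherently order-dependent: whether a played vertex of $N(W)\cap A$ still has something to footprint depends on which vertices \emph{precede} it in $S$, not merely on the set $\widehat{S}$, so a static Hall-type obstruction does not directly produce the pair $(W,U)$ violating Property $\rm{T}^*$.

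The paper closes this gap with a positional induction along the sequence rather than a static matching argument. Listing the $V-A$ vertices of $S$ in order as $w_1,\dots,w_k$, one tracks, after each prefix, the set of vertices of $N(w_1)\cup\cdots\cup N(w_p)$ that have not yet been played, and proves this set always has size at least $p$ (Properties H and $\rm{T}^*$ handle the base cases $p=1,2$). The mechanism for the inductive step is the missing idea: if playing some $a_t\in A$ would shrink this set to size $p-1$, then $a_t$ is adjacent to an already-played $w_i$, hence already dominated, and $N(a_t)\subseteq V-A$ because both $A$ and $V-A$ are independent; so $a_t$ must footprint some $w\in V-A$. Taking $W=\{w_1,\dots,w_p\}$ and $U$ equal to the not-yet-played neighbours of $W$ at that moment gives $|U|=p=|W|$, $w\in N(U)$, and $w\notin N(N(W)-U)$, a direct violation of Property $\rm{T}^*$. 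Running the same argument past the last $w_k$ shows at least $|W|$ vertices of $N(W)\cap A$ never enter $\widehat{S}$, which is precisely the deficit your step (2) requires. So the overall strategy is sound, but the heart of the proof --- extracting the witness $(W,U,w)$ from the first position at which the counting invariant would fail --- is absent from your write-up.
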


\begin{proof}
Let $G$ be a bipartite graph in which Properties H and $\rm{T}^{*}$ hold (for every $\alpha$-set of $G$). Let $A$ be an $\alpha(G)$-set. Suppose that $S$ is a closed neighborhood sequence of $G$ that involves some vertices in $V-A$, and let $w_1,\ldots,w_k$ be these vertices in the order that they appear in $S$. We may set $$S=(a_1,\ldots,a_{i_1},w_1,a_{i_1+1}, \ldots,a_{i_2},w_2,\ldots,a_{i_k},w_k,a_{i_k+1},\ldots,a_m),$$ where each of $i_1=0$  and $i_k=m$ is also possible (in the first case $w_1$ is the first vertex of $S$, and in the second case $w_k$ ends $S$), and $a_r\in A$ for all $r\in [m]$.

We claim that for each $p\in [k]$,
\begin{equation}
\label{eq:neighbors-j}
|(N(w_1)\cup\cdots\cup N(w_p))- \{a_1,\ldots,a_{i_p}\}|\ge p.
\end{equation}
First, we prove this for $p\in\{1, 2\}$ and then use induction. For $p=1$, suppose to the contrary, that $|N(w_1)- \{a_1,\ldots,a_{i_1}\}|=0$. This implies that every vertex in $N[w_1]$ is dominated by $\{a_1,\ldots,a_{i_1}\}$. Hence, $w_1$ footprints no vertex, a contradiction. For $p=2$, suppose to the contrary that $|(N(w_1)\cup N(w_2)) - \{a_1, \dots, a_{i_2}\}| \le 1$. Note that we may assume $|(N(w_1)\cup N(w_2)) - \{a_1, \dots, a_{i_2}\}| = 1$ for otherwise $w_2$ does not footprint a vertex. Let $(N(w_1)\cup N(w_2)) - \{a_1, \dots, a_{i_2}\}= \{a\}$. By Property H, $|N(w_2)| \ge 2$ and therefore some neighbor of $w_2$ is in $\{a_1, \dots, a_{i_2}\}$. Thus, $w_2$ footprints only $a$ and so $aw_1 \not\in E$. It follows that $N(w_1) \subseteq \{a_1, \dots, a_{i_2}\}$. Let $x$ be the last vertex of $N[w_1]$ to appear in $(a_1, \dots, a_{i_1},w_1,a_{i_1+1}, \dots, a_{i_2})$. It is clear that $x \ne w_1$, for otherwise $w_1$ does not footprint a vertex. Hence, $x \in A$, and so by Property $\rm{T}^{*}$, $N(x) \subseteq N(N(w_1)- \{x\})$. We derive that $x$ does not footprint a vertex, a contradiction.

Now, assume that for some $j\ge 2$
\[|(N(w_1)\cup\cdots\cup N(w_{j-1}))-  \{a_1,\ldots,a_{i_{j-1}}\}|\ge j-1\,.\]
We claim that also
\begin{equation}\label{eq:intermediate}
|(N(w_1)\cup\cdots\cup N(w_{j-1})) -  \{a_1,\ldots,a_{i_{j}}\}|\ge j-1.
\end{equation}
Suppose this is not the case, and let $t\in\{i_{j-1}+1,\ldots,i_j\}$ be the smallest index such that
\[|(N(w_1)\cup\cdots\cup N(w_{j-1})) -  \{a_1,\ldots,a_{t}\}| = j-2\,.\]
Let $w\in V-A$ be a vertex footprinted by $a_t$. Let $W=\{w_1,\ldots, w_{j-1}\}$ and let
$U=(N(w_1)\cup\cdots\cup N(w_{j-1})) -  \{a_1,\ldots,a_{t-1}\}$. Then, $|U|=j-1=|W|$, $w\in N(U)$, while $w\notin N(N(W)-U)$, which is a contradiction to Property $\rm{T}^*$, and~\eqref{eq:intermediate} is proved.

Since $S$ is a closed neighborhood sequence, $w_j$ footprints at least one vertex.  We claim that $w_j$ footprints a vertex other than itself.  If this were not
the case, then $N[w_j]-N[\widehat{S'}]=\{w_j\}$, where $S'$ is the (leading) subsequence of $S$ given by
$S'=(a_1,\ldots,a_{i_1},w_1,a_{i_1+1}, \ldots,a_{i_2},w_2,\ldots,a_{i_j})$.
However, this is not possible since $\widehat{S'}$ dominates $w_j$.  Thus, $w_j$ footprints some vertex $a \in N(w_j)$.
It is clear that $a\notin \{a_1,\ldots,a_{i_j}\}$, which implies, combined with~\eqref{eq:intermediate} that
$|(N(w_1)\cup\cdots\cup N(w_j)) -  \{a_1,\ldots,a_{i_j}\}|\ge j$.  By induction we now have that
\[|(N(w_1)\cup\cdots\cup N(w_p))- \{a_1,\ldots,a_{i_p}\}|\ge p, \text{ for every } p\in [k]\,,\]
and so \eqref{eq:neighbors-j} is proved.
In particular, $|(N(w_1)\cup\cdots\cup N(w_k)) -  \{a_1,\ldots,a_{i_k}\}|\ge k.$
We claim that
\begin{equation}
\label{eq:neighbors-k}
|(N(w_1)\cup\cdots\cup N(w_k)) - \{a_1,\ldots,a_{m}\}|\ge k.
\end{equation}
Note that whenever a vertex in $(N(w_1)\cup\cdots\cup N(w_{k}))\cap \{a_{i_k+1},\ldots,a_m\}$ is added to $S$, it does not footprint itself.
Suppose that $|(N(w_1)\cup\cdots\cup N(w_k)) - \{a_1,\ldots,a_{m}\}|< k$, and let $a_t\in(N(w_1)\cup\cdots\cup N(w_{k}))\cap \{a_{i_k+1},\ldots,a_m\}$ be the vertex with the smallest index $t$ such that
$|(N(w_1)\cup\cdots\cup N(w_k)) -  \{a_1,\ldots,a_{t}\}|= k-1$. Let $w\in V-A$ be a vertex footprinted by $a_t$. Now, setting $W=\{w_1,\ldots,w_k\}$, and
$U=(N(w_1)\cup\cdots\cup N(w_k)) -  \{a_1,\ldots,a_{t-1}\}$, we infer that $|U|=k$, and $w\in N(U)$, while $w\notin N(N(W)-U)$. This is a contradiction with Property $\rm{T}^*$, hence~\eqref{eq:neighbors-k} holds. Since $V-A$ is independent by the initial assumption, we get $$|(N(w_1)\cup\cdots\cup N(w_k))\cap A - \{a_1,\ldots,a_{m}\}|\ge k.$$
Thus
\[|\widehat{S}| = m+k \le |\widehat{S}\cap A| + |(N(w_1)\cup\cdots\cup N(w_k))\cap A - (\widehat{S}\cap A)| \le |A|\] and so $G\in\cFa$.
\end{proof}

We now provide an example of a graph which is in $\cFa$ yet does not satisfy Property $\rm{T}^*$. From Section~\ref{sec:examples}, $Q_3 \in \cFa$. However, $Q_3$ does not satisfy Property $\rm{T^*}$. For example, in Figure~\ref{fig:Q_3} consider the $\alpha(Q_3)$-set depicted by the black vertices and the sets $W = \{1, 2,3\}$ and $U = \{a, c, d\}$. Then $3 \in N(U)$ yet $3 \not\in N(N(W) - U)$.

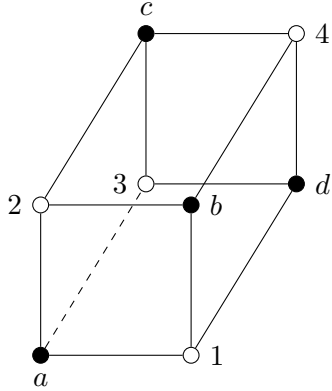
\begin{figure}[h!]
\begin{center}
\begin{tikzpicture}[scale=2]

	\vertex (1) at (0, 0) [fill=black,label=below:$a$]{};
	\vertex (2) at (1, 0) [label=right:$1$]{};
	\vertex (3) at (0, 1) [label=left:$2$]{};
	\vertex (4) at (1, 1) [fill=black,label=right:$b$]{};
	\vertex (5) at (.7,1.14) [label=left:$3$]{};
	\vertex (6) at (1.7,1.14) [fill=black,label=right:$d$]{};
	\vertex (7) at (.7,2.14) [fill=black,label=above:$c$]{};
	\vertex (8) at (1.7,2.14) [label=right:$4$]{};
	\path
	
	(1) edge (2)
	(1) edge (3)
	(2) edge (4)
	(3) edge (4)
	(5) edge (6)
	(5) edge (7)
	(6) edge (8)
	(7) edge (8)
	(1) edge[dashed] (5)
	(2) edge (6)
	(3) edge[] (7)
	(4) edge (8)
	
	;

\end{tikzpicture}
\end{center}
\caption{Graph $Q_3$ does not satisfy Property $\rm{T^*}$.}
\label{fig:Q_3}
\end{figure}

We point out that Property H and Property $\rm{T}^*$ are structural properties. Although we were not able to find structural properties that are necessary and sufficient to guarantee that a graph is in $\cFa$, we are able to show that the  following property based on checking all legal sequences of $G$ is necessary and sufficient to guarantee that a graph is in $\cFa$.

\medskip

\noindent {\bf Property U:} Let $A$ be any $\alpha(G)$-set. If $S$ is a legal sequence and $W = \widehat{S} \cap (V-A)$, then $|(N(W) \cap A) - \widehat{S}| \ge |W|$.

\smallskip

\begin{thm}
\label{thm:main}
If $G$ is a connected graph, then $G \in \cFa$ if and only if $G$ satisfies Property U.
\end{thm}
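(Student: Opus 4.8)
The plan is to prove both directions by relating Property U directly to the definition of $\cFa$, i.e.\ to the existence of a legal sequence longer than $\alpha(G)$. The key accounting identity, which I would establish first as the engine of the whole argument, is the following: for any legal sequence $S$ with $W = \widehat S \cap (V-A)$, every vertex $w \in W$ is footprinted either by itself or by some vertex of $A$ appearing earlier in $S$ (since $A$ is a dominating set, and if $w$ does not footprint itself then whatever it footprints lies outside $N[\widehat{S'}]$ for the leading subsequence $S'$ before $w$, hence outside $A$ only if\ldots); more to the point, I would count $|\widehat S| = |\widehat S \cap A| + |W|$ and try to bound $|W|$ by the number of ``unused'' $A$-neighbors of $W$. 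Concretely, $\widehat S \cap A$ and $(N(W) \cap A) - \widehat S$ are disjoint subsets of $A$, so $|\widehat S \cap A| + |(N(W)\cap A) - \widehat S| \le |A| = \alpha(G)$. Therefore Property U (which says $|(N(W)\cap A) - \widehat S| \ge |W|$) immediately gives $|\widehat S| = |\widehat S \cap A| + |W| \le |\widehat S \cap A| + |(N(W)\cap A) - \widehat S| \le \alpha(G)$. Since every legal sequence extends to a dominating one and the longest legal sequence has length $\ggr(G)$, this shows $\ggr(G) \le \alpha(G)$, hence (with \eqref{eqn:expandedstring}) $G \in \cFa$. That settles the ``if'' direction cleanly.

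For the converse, suppose $G \in \cFa$ and let $A$ be an $\alpha(G)$-set; I want to show Property U holds. Assume for contradiction there is a legal sequence $S$ with $W = \widehat S \cap (V-A)$ and $|(N(W)\cap A) - \widehat S| < |W|$. The idea is to modify or extend $S$ into a legal sequence of length exceeding $\alpha(G)$, contradicting $G \in \cFa$. The natural move is: delete from $S$ all vertices of $W$ together with all vertices of $A \cap \widehat S$ that were ``footprinting'' on behalf of $W$-vertices, and instead append the private neighbors. More robustly, I would argue by taking $S$ to be a \emph{shortest counterexample} and deriving structure: each $w_j \in W$ footprints some vertex, and by the failure of the counting inequality, ``too many'' $A$-vertices are consumed relative to the new $A$-neighbors produced. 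Then I can append to $\widehat S \cap A$ the footprinters' private neighbors, or append vertices of $A - \widehat S$ that are still undominated, to build a legal sequence whose vertex set has size $> \alpha(G)$. The cleanest route: show that if Property U fails then $\widehat{S} \cup (A - \widehat S)$ minus some small set can be reordered into a legal sequence hitting $> |A|$ vertices — but I expect the slick version is to reuse the bound above in reverse, namely if the legal sequence $S$ of maximum length (which has $|\widehat S| = \ggr(G) = \alpha(G)$ since $G \in \cFa$) had $|(N(W)\cap A) - \widehat S| < |W|$, then $|\widehat S \cap A| + |W| = \alpha(G)$ forces $|\widehat S \cap A| > |(N(W)\cap A) - \widehat S| + |\widehat S \cap A| - |W| $; combined with $|\widehat S \cap A| + |(N(W)\cap A) - \widehat S| \le |A|$ one gets a vertex of $A$ outside $N[\widehat S]$, contradicting that a Grundy sequence is dominating. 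So in fact I would prove: \emph{every} legal sequence in $G \in \cFa$ satisfies Property U, by first extending $S$ to a \emph{Grundy} dominating sequence $S^* \supseteq S$ with $\widehat{S^*} \cap (V-A) \supseteq W$, noting $|\widehat{S^*}| = \alpha(G)$, and then observing $A \subseteq N[\widehat{S^*}]$ forces $N(W) \cap A \subseteq \widehat{S^*} \cup ((N(W)\cap A) - \widehat{S^*})$ and a counting of $\widehat{S^*} = (\widehat{S^*}\cap A) \sqcup W'$ with $W' \supseteq W$.

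The main obstacle I anticipate is the converse direction, specifically controlling what happens when a legal sequence $S$ is extended to a Grundy dominating sequence: I need the extension to not swallow the $A$-neighbors of $W$ in a way that invalidates the count, and I need to handle the vertices of $W$ that footprint only themselves versus those that footprint an $A$-vertex. I would manage this by the bijective/injective footprinter map $f_{S^*}: V \to \widehat{S^*}$ (well-defined for a dominating sequence, as noted in the excerpt): each vertex of $A$ has a unique footprinter in $\widehat{S^*}$, each element of $W$ is its own footprinter or points earlier, and a careful inclusion–exclusion on the partition of $\widehat{S^*}$ into $\widehat{S^*}\cap A$ and $\widehat{S^*}\cap(V-A)$, together with $|\widehat{S^*}| = \alpha(G) = |A|$, should pin down $|(N(W)\cap A) - \widehat{S^*}| \ge |\widehat{S^*}\cap(V-A)| \ge |W|$. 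The only genuinely delicate point is ensuring $\widehat{S^*}\cap A$ and $(N(W)\cap A) - \widehat{S^*}$ together cannot exhaust $A$ while leaving room for the $|W|$ non-$A$ vertices — this is exactly where $|\widehat{S^*}| = |A|$ is used, and where the argument closes.
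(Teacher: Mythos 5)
Your ``if'' direction is correct and is in fact a slightly cleaner argument than the paper's: the paper applies Property U only to a Grundy dominating sequence and must first argue that $A-N(W)\subseteq \widehat{S}$, whereas your observation that $\widehat{S}\cap A$ and $(N(W)\cap A)-\widehat{S}$ are disjoint subsets of $A$ gives $|\widehat{S}|=|\widehat{S}\cap A|+|W|\le |\widehat{S}\cap A|+|(N(W)\cap A)-\widehat{S}|\le |A|$ for \emph{every} legal sequence at once, hence $\ggr(G)\le\alpha(G)$.

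The converse, however, has a genuine gap. You briefly mention the correct move --- ``append vertices of $A-\widehat{S}$ that are still undominated'' --- but you never carry out the count, and the route you actually commit to (the ``slick version'') does not work. First, a legal sequence need not extend to a \emph{Grundy} dominating sequence of length $\ggr(G)$; it only extends to \emph{some} dominating sequence, possibly shorter (e.g.\ the legal sequence consisting of the center of a star cannot be extended at all). Second, and more seriously, even granting a dominating extension $S^*$ with $W'=\widehat{S^*}\cap(V-A)\supseteq W$, your counting argument (via the footprinter map and $|\widehat{S^*}|\le|A|$) establishes $|(N(W')\cap A)-\widehat{S^*}|\ge |W'|$, i.e.\ Property U \emph{for $S^*$ and $W'$}. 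This does not transfer to $S$ and $W$: the vertices of $A-\widehat{S^*}$ are guaranteed to lie in $N(W')$ (since $A$ is independent and $S^*$ dominates) but not in $N(W)$, and while $(N(W)\cap A)-\widehat{S}\supseteq (N(W)\cap A)-\widehat{S^*}$, the set $N(W)\cap A$ may be much smaller than $N(W')\cap A$, so no containment rescues the inequality $|(N(W)\cap A)-\widehat{S}|\ge|W|$. The paper's proof avoids this entirely by working with the violating sequence $S$ directly: it appends the vertices of $(A-N(W))-\widehat{S}$ (each footprints itself, since $A$ is independent and these vertices are undominated by $\widehat{S}$) and computes $|\widehat{S'}|=|W|+|(N(W)\cap A)\cap\widehat{S}|+|A-N(W)|>|(N(W)\cap A)-\widehat{S}|+|(N(W)\cap A)\cap\widehat{S}|+|A-N(W)|=|A|$, contradicting $\ggr(G)=\alpha(G)$. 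That explicit count is the missing content of your converse; without it your argument does not close.
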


\begin{proof} Let $S = (x_1, \dots, x_m)$ be a Grundy sequence of $G$, let $A$ be any $\alpha(G)$-set, and let $B = V-A$.
We first show that if $G$ satisfies Property U, then $G \in \cFa$.  We let $W = B \cap \widehat{S}$ and $(N(W) \cap A) \cap \widehat{S} = X$. Let $A' = A - N(W)$. Suppose some $a' \in A'$ is not in $\widehat{S}$. Then $S' = (x_1, \dots, x_m, a')$ is a longer legal sequence, which is a contradiction. Hence, $A' \subseteq \widehat{S}$. By Property U, $|(N(W) \cap A) - \widehat{S}| \ge |W|$ and therefore
\[|\widehat{S}| = |B \cap \widehat{S}| + |A \cap \widehat{S}| = |W| + |X| + |A'| \le |(N(W) \cap A) - \widehat{S}| + |X| + |A'| = |A|.\]
On the other hand, $|\widehat{S}| \ge |A|$ and so it must be that $|\widehat{S}| = |A|$ and $G \in \cFa$.

For the converse, suppose  $G$ does not satisfy Property U.  Thus, there exists a legal sequence $S = (x_1, \dots, x_m)$ where $W = B \cap \widehat{S}$ and $|(N(W) \cap A) - \widehat{S}| < |W|$. Let $A' = A - N(W)$. Let $A''=\{a \in A'\,:\, a \notin \widehat{S}\}$.  Write $A'' = \{y_1, \dots, y_{\ell}\}$. Then we can extend $S$ to the legal sequence $S' = (x_1, \dots, x_m, y_1, \dots, y_{\ell})$. Therefore,
\begin{eqnarray*}
\ggr(G) \ge |\widehat{S'}| &=& |W| + |(N(W) \cap A) \cap \widehat{S}| + |A'| \\
&>& |(N(W) \cap A )- \widehat{S}| + |(N(W) \cap A) \cap \widehat{S}| + |A - N(W)| \\
&=& |A|,
\end{eqnarray*}
and we conclude that $G \notin \cFa$.
\end{proof}

We can use Property U to show that the only $n$-crossed prism graph in $\cFa$ is the $4$-crossed prism graph. Recall the $n$-crossed prism graph for even positive integer $n\ge 4$ is defined as follows. Take two disjoint copies of $C_n$, say $C_n^1 = u_1u_2\cdots u_n$ and $C_n^2 = v_1v_2\cdots v_n$ and add the edges $v_su_{s+1}$ for $s \in \{1, 3, \dots, n-1\}$ and the edges $v_tu_{t-1}$ for $t \in \{2, 4, \dots, n\}$. The $4$-crossed prism graph is isomorphic to the $3$-dimensional hypercube $Q_3$.  Note that the $n$-crossed prism graph is bipartite, cubic, and vertex-transitive.

\begin{cor} The $n$-crossed prism graph is in $\cFa$ if and only if $n=4$.
\end{cor}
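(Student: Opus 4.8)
The plan is to apply Theorem~\ref{thm:main}: the $n$-crossed prism graph $G$ is in $\cFa$ if and only if it satisfies Property U. For the forward direction, I first note that $G$ is bipartite, cubic, and vertex-transitive with $n(G)=2n$, so $\alpha(G)=n$; moreover, since $G$ is connected and bipartite, each of its two partite classes is the unique independent set of size $n$, so I may take $A$ to be one fixed partite class and $B=V-A$ the other. The task is then to construct, for every even $n\ge 6$, a legal sequence $S$ that violates the inequality $|(N(W)\cap A)-\widehat S|\ge |W|$ with $W=\widehat S\cap B$, and conversely to verify for $n=4$ (where $G\cong Q_3\in\cFa$ was already established in Section~\ref{sec:examples}) that no such sequence exists.

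The key step is the construction for $n\ge 6$. I would look for a short legal sequence using a few vertices of $B$ whose neighborhoods in $A$ overlap as much as possible. A natural candidate exploits a $6$-cycle in $G$: in the crossed prism, vertices $u_1,v_1,u_2,v_2$ and nearby cycle vertices form configurations where two $B$-vertices $w_1,w_2$ share two common $A$-neighbors and each has only three neighbors total, so $|N(\{w_1,w_2\})\cap A|\le 4$. Concretely, I would first play the $A$-vertices of $N(\{w_1,w_2\})$ that are "used up" as footprinters of themselves, then play $w_1$ and $w_2$ (each footprinting a remaining private neighbor in $A$), so that $W=\{w_1,w_2\}$ but $|(N(W)\cap A)-\widehat S|$ can be forced below $2$; extending by the rest of $A-N(W)$ keeps the sequence legal and shows $\ggr(G)\ge |A|+1$. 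I expect that a careful choice — analogous to the sequence $S$ exhibited for the bipartite family $G_n$ earlier in this section — will give $\widehat S$ of size $n+1$, which already proves $G\notin\cFa$. The explicit indices need to be pinned down using the edge set $v_su_{s+1}$, $v_tu_{t-1}$, but this is a finite bookkeeping task once the cyclic pattern is identified, and the vertex-transitivity of $G$ means it suffices to exhibit one such bad sequence.

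For the converse ($n=4$), the cleanest route is simply to cite the already-proven fact that the $4$-crossed prism is $Q_3$ and that $Q_3\in\cFa$ (shown via the minimum-rank argument and also noted directly after Proposition~\ref{prop:PropertyH}'s discussion), so Property U holds there by Theorem~\ref{thm:main}; no independent verification of Property U for $n=4$ is needed.

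The main obstacle I anticipate is making the construction uniform in $n$: I need one family of legal sequences that works for all even $n\ge 6$ simultaneously, and I must check that the $A$-vertices I prepend genuinely footprint themselves (i.e., each has a private neighbor not yet dominated) at the moment they are played, and that $w_1,w_2$ still footprint new vertices afterward. Getting the ordering right so that every vertex in the sequence has a live footprint is the delicate part; the counting inequality $|(N(W)\cap A)-\widehat S|<|W|$ then follows almost immediately from the overlap structure of the two chosen $B$-vertices' neighborhoods.
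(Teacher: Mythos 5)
Your overall strategy is exactly the paper's: invoke Theorem~\ref{thm:main}, exhibit for each even $n\ge 6$ a legal sequence violating Property U, and for $n=4$ simply cite $G_4\cong Q_3\in\cFa$ from Section~\ref{sec:examples}. The $n=4$ half of your argument is complete and matches the paper.

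The gap is in the other half: the explicit violating sequence is the entire content of the proof, and the concrete recipe you sketch cannot be made to work. You propose to take two $B$-vertices $w_1,w_2$ with two common $A$-neighbors (so $|N(W)\cap A|=4$ for $W=\{w_1,w_2\}$), to play three of these four $A$-vertices first, and then to play $w_1$ and $w_2$. But each $w_i$ is a cubic vertex all of whose neighbors lie in $A$, so $N[w_i]=\{w_i\}\cup N(w_i)$ with $N(w_i)$ contained in the four-element set $N(W)\cap A$. After three of those four vertices have been played and the first of $w_1,w_2$ has been played, every vertex of the second one's closed neighborhood is already dominated (the three played $A$-vertices dominate themselves and the $w_i$'s, and the fourth $A$-vertex is either outside $N(w_2)$ or has just been footprinted by $w_1$, since it is undominated beforehand only if it is a neighbor of $w_1$). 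Hence the second $B$-vertex footprints nothing and the sequence is illegal, no matter which three $A$-vertices you prepend. You correctly flag the ordering as ``the delicate part,'' but the resolution is that interleaving is forced, not optional. The paper's sequence is $S=(u_2,v_1,v_2,u_3,u_1)$ with $A=\{u_i,v_i: i \text{ odd}\}$: here $u_2$ footprints $u_1,u_3,v_1$; then $v_1$ footprints $v_2,v_n$; $v_2$ footprints $v_3$; $u_3$ footprints $u_4,v_4$; and $u_1$ footprints $u_n$ (using $n\ge 6$). Then $W=\{u_2,v_2\}$, $N(W)\cap A=\{u_1,u_3,v_1,v_3\}$, and $(N(W)\cap A)-\widehat{S}=\{v_3\}$, so $1<2=|W|$ and Property U fails. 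A minor further point: you do not need the auxiliary claims that $\alpha(G_n)=n$ or that the partite classes are the only maximum independent sets; to refute Property U it suffices to exhibit one $\alpha$-set and one bad legal sequence.
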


\begin{proof} Let $G_n$ be the $n$-crossed prism graph, and let $A = \{u_i, v_i: \text{$i$ is odd}\}$ and $B = \{u_i, v_i: \text{$i$ is even}\}$ be the two $\alpha(G_n)$-sets. We first show that the $n$-crossed prism graph $G_n$ does not satisfy Property U when $n >4$. Consider the legal sequence $S = (u_2, v_1, v_2, u_3, u_1)$.
Thus, $W = B \cap \widehat{S} = \{u_2, v_2\}$ and $(N(W) \cap A) - \widehat{S} = \{v_3\}$. Hence, $G_n$ does not satisfy Property U when $n>4$.
Since $G_4=Q_3$, the converse follows from the result mentioned in Section~\ref{sec:examples}.
\end{proof}

Finally, we note that there is some connection to studying graphs containing triangles in  $\cFa$ and studying bipartite graphs in $\cFa$.
In what follows, we let $G_{uv}$ denote the graph obtained from $G$ be identifying two vertices $u$ and $v$ of $G$ and then removing any duplicate edges that
result from this identification.

\begin{thm}  \label{thm:vertexidentification}
Suppose $G\in \cFa$ and $I$ is a maximum independent set in $G$. For any pair $x, y \in V - I$, $\alpha(G_{xy}) = \ggr(G_{xy})$.
\end{thm}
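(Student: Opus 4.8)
The plan is to prove two facts about $G_{xy}$ and then combine them with the hypothesis $G\in\cFa$. Write $z$ for the vertex of $G_{xy}$ obtained by identifying $x$ and $y$, so that $V(G_{xy})=(V-\{x,y\})\cup\{z\}$, where $N_{G_{xy}}(z)=(N_G(x)\cup N_G(y))-\{x,y\}$ and all adjacencies among the vertices of $V-\{x,y\}$ are unchanged (identification creates no new edges inside $V-\{x,y\}$). The first fact is that $\alpha(G_{xy})=\alpha(G)$. Since $x,y\notin I$, the set $I$ lies in $V-\{x,y\}$ and is still independent in $G_{xy}$, so $\alpha(G_{xy})\ge |I|=\alpha(G)$; conversely, given any independent set $J$ of $G_{xy}$, if $z\notin J$ then $J$ is independent in $G$, while if $z\in J$ then no vertex of $J-\{z\}$ is a $G$-neighbour of $x$, so $(J-\{z\})\cup\{x\}$ is independent in $G$; hence $|J|\le\alpha(G)$ and equality follows. (Note this step uses $x,y\notin I$, i.e.\ the specific maximum independent set $I$.)

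The second fact is that $\ggr(G_{xy})\le\ggr(G)$ (this part uses nothing about $I$). I would take a Grundy dominating sequence $S=(s_1,\dots,s_k)$ of $G_{xy}$ and lift it to a legal sequence of $G$ of length at least $k$. If $z\notin\widehat S$, then $S$ is already a legal sequence of $G$: whatever vertex $s_{i+1}$ footprints in $G_{xy}$ with respect to $S$ — if this is $z$, replace it by a $G$-neighbour of $s_{i+1}$ lying in $\{x,y\}$ — is footprinted by $s_{i+1}$ in $G$ with respect to $S$, because adjacency among vertices of $V-\{x,y\}$ and the relation ``$v$ is adjacent to $x$ or $y$ in $G$'' vs.\ ``$v$ is adjacent to $z$ in $G_{xy}$'' translate exactly. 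If $z=s_p$, pick any vertex $w$ that $z$ footprints in $G_{xy}$, so $w=z$ or $w\in N_G(x)\cup N_G(y)$; form $\tilde S$ from $S$ by replacing the term $s_p=z$ with $x$ if $w=z$ or $wx\in E(G)$, and with $y$ otherwise (so then $wy\in E(G)$). The claim to verify is that $\tilde S$ is legal in $G$: at position $p$ the substituted vertex footprints $w$ (or itself, when $w=z$); at every position $i+1\ne p$ the term $s_{i+1}$ footprints the same vertex as it did in $G_{xy}$, except that if that vertex was $z$ — which forces $i+1<p$ — it is replaced by a $G$-neighbour of $s_{i+1}$ in $\{x,y\}$, which is not dominated by $\tilde s_1,\dots,\tilde s_i$. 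This gives $\ggr(G)\ge k=\ggr(G_{xy})$.

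Putting the pieces together: since $G\in\cFa$ we have $\alpha(G)=\ggr(G)$, hence $\alpha(G_{xy})=\alpha(G)=\ggr(G)\ge\ggr(G_{xy})\ge\alpha(G_{xy})$ by~\eqref{eqn:expandedstring}, so equality holds throughout and in particular $\alpha(G_{xy})=\ggr(G_{xy})$. The main obstacle is the fiddly verification in the second step when $z=s_p$: for each index of $\tilde S$ one must name the footprinted vertex and confirm it has not already been dominated by an earlier term of $\tilde S$. Two points deserve care: (i) the choice of $x$ versus $y$ at position $p$ must be made so that the substituted vertex actually footprints something — this is exactly why we branch on whether the footprinted vertex $w$ of $z$ is a $G$-neighbour of $x$ or of $y$; and (ii) an earlier term $s_{i+1}$ that footprinted $z$ in $G_{xy}$ must now footprint $x$ or $y$ in $G$, and one checks this does not clash with position $p$ — if $z$ footprints \emph{itself} at position $p$, then no earlier term of $S$ is adjacent in $G$ to $x$ or to $y$, so the two situations are disjoint; and if $z$ footprints some $w\ne z$ at position $p$, then the substituted vertex footprints that $w$, regardless of whether $x$ (or $y$) has already been dominated.
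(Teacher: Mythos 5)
Your proof is correct and follows essentially the same route as the paper's: both establish $\alpha(G_{xy})=\alpha(G)$ via the independent set $I$ and then lift a legal sequence of $G_{xy}$ to a legal sequence of $G$ of the same length, splitting on whether the identified vertex occurs in the sequence and, if so, substituting $x$ or $y$ according to which of them is adjacent to the vertex it footprints. The only cosmetic difference is that the paper argues by contradiction from a hypothetical legal sequence of length $\alpha(G)+1$, whereas you prove the inequality $\ggr(G_{xy})\le\ggr(G)$ directly.
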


\begin{proof} Write $I = \{v_1, \dots, v_k\}$ and note that $S = (v_1, v_2, \dots, v_k)$ is a legal sequence. Fix $x, y \in V - I$ and let $w$ denote the vertex of $G' = G_{xy}$ that arises from identifying $x$ and $y$. Note that $\alpha(G') = k$ and that $S$ is a legal sequence in $G'$ since each vertex of $S$ footprints itself. Thus, $\ggr(G') \ge k$. Suppose there exists a legal sequence, say $A = (t_1, t_2, \dots, t_{k+1})$, in $G'$ of length $k+1$. For each $i \in [k+1]$, there is a nonempty
subset $U_i$ of $V(G')$ such that $t_i$ footprints each vertex of $U_i$ with respect to $A$.

Suppose by contradiction that $w \not\in \widehat{A}$. 
If $U_i-\{w\} \neq \emptyset$ for each $i \in [k+1]$, then every
vertex of $A$ footprints at least one vertex in $G$ and hence $A$ is a legal sequence in $G$.  This contradicts the fact that $\ggr(G)=k$.  Thus, $U_j=\{w\}$ for some $j$ with $2 \le j \le k+1$.
Without loss of generality we may assume that $xt_j \in E$.  Now, as a sequence in $G$ we see that $A$ is legal since $t_i$ footprints $U_i$ for $i \neq j$ and $t_j$
footprints $x$.  This again contradicts $\ggr(G)=k$ and therefore we infer that $w \in \widehat{A}$.  That is, $w = t_i$ for some $i \in [k+1]$.  Consider the sequences
$A' = (t_1, t_2, \dots, t_{i-1}, x, t_{i+1}, \dots, t_{k+1})$ and $A'' = (t_1, t_2, \dots, t_{i-1}, y, t_{i+1}, \dots, t_{k+1})$ in $G$.  If $w$ footprints itself with respect to $A$ in $G'$, then $x$ footprints itself with respect to $A'$ in $G$.  Otherwise, $w$ footprints a vertex $s_i \in U_i-\{w\}$ with respect to $A$.  If $xs_i \in E$,
then $x$ footprints $s_i$ in $G$ with respect to $A'$.  Otherwise, $ys_i \in E$ and $y$ footprints $s_i$ in $G$ with respect to $A''$.
It follows that $A'$ or $A''$  is a legal sequence in $G$, which contradicts $\ggr(G)=k$.

Therefore, $\ggr(G') = k = \alpha(G')$.
\end{proof}

By Theorem~\ref{thm:vertexidentification}, if we contract all adjacent pairs of vertices in the complement of an $\alpha(G)$-set the resulting graph will be a bipartite graph
in the class $\cFa$ if the original graph is in $\cFa$.  Thus, a characterization of the bipartite graphs in $\cFa$ gives partial information about the structure of all
graphs in $\cFa$.


\section{Concluding remarks}

In this paper, we initiated the study of graphs $G$ in which $\ggr(G)=\Gamma(G)$, or $\ggr(G)=\alpha(G)$, respectively. Since the graphs $G$ in which $\ggr(G)=\gamma(G)$ have been completely characterized~\cite{br-go-mi-ra-ri-2014,er-2019}, studying the mentioned two classes of graphs is the natural step forward. 

We found several properties of graphs $G$ in family $\cal F$ of connected twin-free graphs with $\ggr(G)=\Gamma(G)$; the properties are related to the partition of a graph $G$ derived from a $\Gamma(G)$-set $D$, which is formed by $D$, private neighborhoods of vertices in $D$, and the remainder of the graph. 
It would be interesting to know if these properties together imply that the graph belongs to $\cal F$, which we formulate as the following problem. 
 
\begin{prob} \label{prob:1} Is the reverse direction in Proposition~\ref{prp:cFnecessary} also true? That is, do the properties (i)-(iv) together imply that a twin-free, connected graph is in $\cal F$?
\end{prob}

Besides the class $\cal F$ we also consider the class $\cFa$ consisting of connected, twin-free graphs with $\alpha(G)=\ggr(G)$.
The two classes of graphs are rather rich, which is reflected in a number of families that belong to one or both of the classes (for instance, we proved that a large family of Kneser graphs as well as all cographs and hypercubes satisfy $\alpha=\ggr$). In addition, several graph operations preserve the property of being in one of the two classes. 

The most thorough investigation was given to the class $\cFa$. We proved that triangle-free graphs in $\cFa$ are always bipartite graphs in which an $\alpha$-set is either unique or of the size half the order. We found two structural properties (called Property H and Property T) of bipartite graphs in $\cFa$, but they turned out not to be sufficient for a graph to be in $\cFa$.   It would be interesting to investigate whether Property T could be strengthened in such a way that together with Property H it would yield a characterization of bipartite graphs in $\cFa$. In particular, Propositions~\ref{prop:PropertyH}, \ref{prop:PropertyT} and~\ref{prp:sufficient} lead us to the following question.

\begin{prob} \label{prob:2} Is there a condition stronger than Property T but weaker than Property $\rm{T}^*$ such that the connected bipartite graphs in $\cFa$ would be characterized by this condition and Property H?
\end{prob}

Finally, a different kind of condition (called Property U) was established, which characterizes all graphs in $\cFa$. The condition relies on certain connections between legal sequences and $\alpha$-sets in $G$, and as a result we could determine which of the graphs are in $\cFa$ within the class of $n$-crossed prisms. It will be interesting if one can use Property U to determine the graphs in $\cFa$ within some other natural class of graphs. 


\section*{Acknowledgement}

We are grateful to one of the reviewers for providing the beautiful proof of Proposition 12.
G.B. was supported in part by the National Research, Development and Innovation Office - NKFIH under the grant SNN 129364. B.B. was supported by the Slovenian Research Agency (ARRS) under the grants P1-0297, J1-2452, J1-3002, and J1-4008.

\end{document}